\newtheorem{lemma}{Lemma}
\newtheorem{theorem}{Theorem}
\newtheorem{proposition}{Proposition}
\newtheorem{corollary}{Corollary}
\theoremstyle{definition}}
\theoremstyle{definition}}
\theoremstyle{definition}\newtheorem*{remark}{Remark}}
\theoremstyle{definition}}
\numberwithin{lemma}{section}
\newcommand{\R}{\mathbf{R}}
\newcommand{\N}{\mathbf{N}}
\newcommand{\sech}{\textnormal{sech}}
\newcommand{\p}{\partial}
\newcommand{\supp}{\textnormal{supp }}
\newcommand{\tw}{\tilde{w}}
\newcommand{\tv}{\tilde{v}}
\newcommand{\la}{\langle}
\title{Asymptotic Stability for KdV Solitons in Weighted $H^s$ Spaces}
\author[B. Pigott]{Brian Pigott}
\address{Wofford College\\pigottbj@wofford.edu}
\author[S. Raynor]{Sarah Raynor}
\address{Wake Forest University\\raynorsg@wfu.edu}
\date{\today}
\subjclass{(Primary) 35Q53, 35B35 (Secondary) 37K40, 35B40, 37K45, 35Q51}
\thanks{B. Pigott completed this work while a Teacher-Scholar Postdoctoral Fellow at Wake Forest University would like to thank the Department of Mathematics at Wake Forest University for its support.}
\thanks{S. Raynor would like to thank the Simons Foundation for their support during the creation of this work.}
\begin{document}

\begin{abstract}
In this work, we consider the stability of solitons for the KdV equation below the energy space, using spatially-exponentially-weighted norms.  Using a combination of the $I$-method and spectral analysis following Pego and Weinstein, we are able to show that, in the exponentially weighted space, the perturbation of a soliton decays exponentially for arbitrarily long times.  The finite time restriction is due to a lack of global control of the unweighted perturbation.  
\end{abstract}

\maketitle

\section{Introduction}

Consider the initial value problem for the Korteweg-de Vries equation (KdV)
\begin{equation}
\label{kdv}
\left \{ \begin{array}{l l}
u_t+u_{xxx}+\p_x(u^2)=0, & \\
u(0,x) = u_{0}(x).\\
\end{array}
\right .
\end{equation}
This is a well-known nonlinear dispersive partial differential equation modelling the behavior of water waves in a long, narrow, shallow canal.  Of particular interest are soliton solutions to this equation, which are special travelling wave solutions of the form
\begin{equation}\label{soliton}
Q_{c,x_0}(x,t)=\psi_c(x-ct-x_0)=\frac{3c}{2}\sech^2(\frac{\sqrt{c}}{2}(x-ct-x_0)).
\end{equation}
The stability of these solitons has been an area of intense study for many years.  One might first be interested in the orbital stability of the soliton.  That is, if $u_{0}(x)-\psi_c(x)$ is small in an appropriate norm, then, for all time there is some $x_0(t)$ so that $u(x,t)-\psi_c(x-x_0(t))$ remains small.  The study of orbital stability in the energy space $H^1$ began with with Benjamin \cite{MR0338584} and Bona \cite{MR0386438}; see also \cite{MR897729}. This work was made systematic by Weinstein \cite{MR820338}, who established the orbital stability of solitons for nonlinear Schr\"{o}dinger equations and for generalized KdV equations.  One can also study the possibility of orbital stability of solitons in $H^s$ for $s$ not an integer, and in \cite{MR1995945} and \cite{MR3082567} it was shown that, for $0<s<1$, the possible orbital instability of the solitons is at most polynomial in time.

Also of interest is the concept of asymptotic stability, meaning that there exist $c_+$ and $x_+$ so that, in some appropriate sense, $u(x,t)-\psi_{c_+}(x-c_+t-x_+)$ goes to zero as time goes to positive infinity.  Asymptotic stability for the Korteweg-deVries equation was first studied by Pego and Weinstein in \cite{MR1289328}.  In that paper, the authors considered the behavior of solutions to KdV in the weighted space $H^1_a=\{f|\|e^{ax}f(x)\|_{H^1} < + \infty\}$, for appropriate choice of $a$.  In that setting, they were able to conclude that solitons are asymptotically stable and, in fact, converge exponentially to the limiting soliton.   Asymptotic stability in the space $H^1$ was established by Martel and Merle in \cite{MR1826966,MR2109467,MR2385662}, and in $L^2$ by Merle and Vega \cite{MR1949297} via the Miura transform. More recently, Mizumachi and Tzvetkov \cite{mizumachi-tzvetkov} have adapted arguments from \cite{MR1289328} to establish asymptotic stability for KdV solitons in $L^{2}$, with exponential rate of approach in an exponentially weighted space.

In this paper, we consider the case of asymptotic stability in $H^s$, $0 < s < 1$.  It may seem clear that asymptotic stability in $L^2$ and $H^1$ should imply the same in the spaces $H^s$, $0<s<1$, but this is not the case.  The natural interpolation does not work because $H^s$ functions are not in $H^1$.  Another natural technique to consider is the well-known $I$-method of Colliander, Keel, Staffilani, Takaoka, and Tao.  This has been done, for KdV in \cite{MR1995945} and \cite{MR3082567} and for the nonlinear Schr\"{o}dinger equation in \cite{MR1961447,MR1951312}.  However, the $I$-method naturally loses an error term which amounts to polynomial growth in time of the computed perturbation.  We note that this is an artifact of the technique, and is not believed to be a real property of solutions to KdV.  

Our goal here is to remove the polynomial loss of control of the perturbation.  To do so, we reconsider the exponentially weighted spaces of Pego and Weinstein.  We establish local well-posedness for the exponentially weighted soliton perturbation in a space $X^{s,1/2,1}$ which embeds into the Bourgain space $X^{s,1/2+}$, partially following the local well-posedness work of Molinet and Ribaud \cite{MR1889080,MR1918236}, and Guo and Wang \cite{MR2514729} on dispersive-dissipative equations. In so doing we establish multilinear estimates that accommodate the presence of the exponential weight. For technical reasons, this requires that $s > 7/8$.  We then use the $I$-method to map our solutions into an exponentially-weighted version of $H^1$.  Finally, we run an iteration scheme to establish global control of the perturbation in $H^s$ and the exponentially weighted space $H^s_a$, concluding that the soliton is exponentially asymptotically stable in $H^s_a$ for $s > 7/8$. Specifically, we show the following:

\begin{theorem}\label{main}
There exist $\epsilon_1 > 0$ and $0 <r < 1$ and for every $T>0$ there exists $\epsilon_2 >0$ so that if $\|e^{ay}I_1v(0)\|_{H^1} < \epsilon_1$, $|c(0)-c_0| < \epsilon_1$  and $\|I_1v(0)\|_{H^1}< \epsilon_2$, then there exist piecewise differentiable functions $c(t)$, $\gamma(t)$ and a constant $C> 0$ so that for all $ t \in [0,T]$:
\begin{enumerate}
\item $\|e^{ay}I_1v(t)\| < C \epsilon_1r^t$,
\item $|\dot{c}| + |\dot{\gamma}| < C\epsilon_1r^t$, and 
\item $|c(t)-c_0| < 2C\epsilon_1$.
\end{enumerate}
\end{theorem}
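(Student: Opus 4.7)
The plan is to combine the Pego--Weinstein spectral framework on exponentially weighted spaces with the $I$-method approach at the level of regularity $s > 7/8$. First, I decompose
\[ u(t,x) = \psi_{c(t)}(x-\gamma(t)) + v(t,y), \qquad y = x-\gamma(t),\]
and choose the modulation parameters $(c(t),\gamma(t))$ so that $v$ satisfies the standard two orthogonality conditions against the secular modes of the linearized operator $\mathcal{L}_c = -\p_y^3 + c\p_y - 2\p_y(\psi_c\,\cdot)$. This produces a coupled system: an ODE for $(\dot c,\dot\gamma)$ driven by $v$, and an evolution equation of the form $v_t = \mathcal{L}_c v + \textrm{(quadratic nonlinearity and modulation error)}$.

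Next I pass to the smoothed variable $w = I_1 v$ and derive its equation. The multilinear estimates outlined in the introduction, carried out in $X^{s,1/2,1}$ with the exponential weight, control both the nonlinearity and the $I$-commutator: the leading contribution depends quadratically on $\|e^{ay}w\|_{H^1}$, while the $I$-commutator is absorbed by $\|I_1 v\|_{H^1}$ times a factor that grows polynomially in $t$. The key spectral input is the Pego--Weinstein result that on $H^1_a$, once the orthogonality conditions are imposed, the semigroup $e^{t\mathcal{L}_c}P$ decays like $e^{-bt}$ for some $b > 0$.

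Writing Duhamel's formula for $e^{ay}w$ against the Pego--Weinstein semigroup and combining with the modulation ODE produces an integral inequality of the form
\begin{align*}
\|e^{ay}w(t)\|_{H^1} &\le C\epsilon_1 e^{-bt} + C\int_0^t e^{-b(t-s)}\bigl(\|e^{ay}w(s)\|_{H^1}^2 + \|I_1 v(s)\|_{H^1}\|e^{ay}w(s)\|_{H^1}\bigr)\,ds,\\
|\dot c(t)| + |\dot\gamma(t)| &\le C\|e^{ay}w(t)\|_{H^1}.
\end{align*}
A standard continuity argument then produces the exponential decay $\|e^{ay}w(t)\|_{H^1} < C\epsilon_1 r^t$ with $r = e^{-b/2}$, which is conclusion (1) and drives (2); conclusion (3) follows by integrating (2) over $[0,t]$ and using $|c(t)-c_0| \le \int_0^t |\dot c(s)|\,ds \le C\epsilon_1/(1-r)$.

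The main obstacle is precisely the finite-time bootstrap on the unweighted quantity $\|I_1 v(t)\|_{H^1}$. The $I$-method only supplies an almost-conservation law yielding $\|I_1 v(t)\|_{H^1} \lesssim \epsilon_2 + C(\epsilon_1)\, t^N$ for some $N$ tied to the smoothing parameter in $I_1$ and the regularity $s > 7/8$. To keep this quantity below the threshold required by the weighted continuity argument on the whole interval $[0,T]$, I must choose $\epsilon_2 = \epsilon_2(T,\epsilon_1)$ sufficiently small; this is exactly why $\epsilon_2$ in the statement depends on $T$ while $\epsilon_1$ does not. Iterating the weighted decay estimate over successive time slices whose length is tuned to $b$ and the polynomial $I$-loss then closes the argument on $[0,T]$.
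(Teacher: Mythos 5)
Your overall architecture (modulation so that the weighted perturbation lies in the range of the continuous-spectrum projection, exponential decay from the Pego--Weinstein spectral gap, a bootstrap on the weighted norm, and a $T$-dependent choice of $\epsilon_2$ to keep the unweighted norm below threshold) matches the paper's. But there is a genuine gap: you work with a \emph{fixed} smoothing operator $I_1$, whereas the paper's entire iteration is built on a smoothing parameter $N(n)$ that grows geometrically with the time step, and this is not a cosmetic difference. The reason is a forcing term you have omitted from your integral inequality. Because $I_N$ does not commute with multiplication by $\psi_c$, the soliton--perturbation interaction leaves behind the commutator $e^{ay}\partial_y\big(I_N(\psi_c v)-\psi_c I_N v\big)$ in the source term $\tilde{\mathscr{F}}$ of the weighted equation. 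The paper's multilinear estimate (Proposition \ref{Imethoddecay} with $u=\psi_c$) bounds this by
\begin{equation*}
N^{3/4-s+}\Big(\|e^{ay}I\psi_c\|_{X^{1}}\,\|I_Nv\|_{X^{1}}+\|I\psi_c\|_{X^{1}}\,\|e^{ay}I_Nv\|_{X^{1}}\Big),
\end{equation*}
and the first summand puts the exponential weight on the \emph{soliton}, not on the perturbation: it is of size $N^{3/4-s}\cdot O(1)\cdot\|I_Nv\|_{H^1}$, with no factor of the weighted norm at all. Since the unweighted quantity $\|I_Nv\|_{H^1}$ does not decay in time (it is merely bounded by a fixed small constant $c_2$, and in fact grows slowly), this is a \emph{non-vanishing constant forcing} of size $\sim N^{3/4-s}c_2$ in the energy inequality for $\|e^{ay}I_Nv\|_{H^1}^2$. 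With $N=1$ the resulting differential inequality $\frac{d}{dt}K\le -2bK + Cc_2\sqrt{K}+\cdots$ only drives $\sqrt{K}$ down to the equilibrium level $\sim c_2/b$, a fixed constant; conclusion (1), exponential decay to zero, cannot be obtained. The paper escapes this precisely by taking $N(n)=\kappa^{(-\frac{1}{3/4-s+}+)n}$ so that $N(n)^{3/4-s+}\ll\kappa^{n}$ matches the target decay, at the price of a transition cost $(N(n+1)/N(n))^{1-s}$ each step that must be beaten by the spectral decay; this balance is what ties the rate $r$ to $\frac{1-s}{s-3/4}$ rather than to $b$ alone, as in your $r=e^{-b/2}$.

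Two smaller points. First, with a fixed $I_1$ the almost conservation law carries no commutator gain, so the unweighted norm can grow like $(1+Cc_2)^{t/\delta}$, i.e.\ exponentially in $t$, not polynomially as you assert; this happens not to be fatal only because $\epsilon_2$ is allowed to depend on $T$, but your stated growth rate is not what the method delivers. Second, once $N$ varies with the step, your bootstrap must also account for the increment $\|I_{N(n+1)}v\|^2\le(N(n+1)/N(n))^{1-s}\|I_{N(n)}v\|^2$ on \emph{both} the weighted and unweighted quantities; the paper's induction lemma tracks exactly this bookkeeping, and it is absent from your outline.
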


The paper is organized as follows: In section 2, we will set up our notation and establish basic results.  In section 3, we will establish some necessary estimates to establish local well-posedness in section 4.  In section 5, we will run the iteration scheme and establish the main result of the paper.

\section{Notation and Basic Results}

We will define the Fourier multiplier operator $I_N$ by $\widehat{I_Nf}(\xi)=m_N(\xi)\hat{f}(\xi)$, with $m_N$ a smooth, even, decreasing function of $|\xi|$ which satisfies $m_N(\xi)=1$ for $|\xi|<N$ and $m_N(\xi)=\frac{|\xi|^{s-1}}{N^{s-1}}$ for $|\xi| > 10N$.  In this paper, $N$ will be a function of our time-step $n$, and, in particular 
\begin{equation*}
N(n) = \kappa^{\left (-\frac{1}{\frac{7}{4}-s}+\eta_1 \right )n}
\end{equation*}
for $\eta_1 >0 $ very small, where  $1 > \kappa > \sqrt{1-\frac{b}{2}}$, and $b$ are defined below. 

We define $\tilde{v}_n(t) = I_{N(n)}v(y,t)$ and $\tilde{w}_n(t)=e^{ay}I_{N(n)}v(y,t)$, where $y= x-\int_0^tc(s)ds-\gamma(t)$, and $c(t)$, $\gamma(t)$ are chosen so that, at each time $t$, for appropriate value of $n$, $\|\tilde{w}_n(t)\|_{L^2}$ is minimized.   In order to do so, we first need to consider the difference equations satisfied by $\tilde{v}$ and $\tilde{w}$, and consider their linearizations about the soliton.

\begin{lemma} The perturbation $\tilde{v}$ satisfies the difference equation
\begin{equation}
\label{vequation}
\begin{aligned}
(\tilde{v}_n)_t &= \p_y(-\p_y^2+c_0-2\psi_c)\tilde{v}_n + I_{N(n)}\p_y({v}^2) + \p_y(I_{N(n)}(\psi_cv)-\psi_cI_{N(n)}v) \\ \qquad & + (\dot{\gamma}\p_y + \dot{c}\p_c)I_{N(n)}\psi_c + (\dot{\gamma}+c-c_0)\p_y\tilde{v}
\end{aligned}
\end{equation}
Moreover, the perturbation $\tilde{w}_n(t)$ satisfies the difference equation
\begin{equation}\label{wequation}
\begin{aligned}
(\tilde{w}_n)_t &= e^{ay}\p_y(-\p_y^2+c_0-2\psi_c)e^{-ay}\tilde{w}_n+(c-c_0-\dot{\gamma})(\p_y-a)\tilde{w}\\   -&e^{ay}I_{N(n)}\p_y(v^2)-e^{ay} (\dot{c}\p_c + \dot{\gamma}\p_y)I_{N(n)}\psi_c-e^{ay}\p_y(I_{N(n)}(\psi_cv)-\psi_cI_{N(n)}v)
\end{aligned}
\end{equation}
\end{lemma}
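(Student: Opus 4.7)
The plan is to substitute the ansatz $u(x,t) = \psi_{c(t)}(y) + v(y,t)$, with moving coordinate $y = x - \int_0^t c(s)\,ds - \gamma(t)$, directly into \eqref{kdv}, use the soliton profile equation to cancel the pure-profile terms, and then apply the Fourier multiplier $I_{N(n)}$ (respectively the weight $e^{ay}$) while tracking the only nontrivial commutator that arises.

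To obtain \eqref{vequation}, I would first apply the chain rule. Since $y_t|_x = -(c+\dot\gamma)$ and $y_x = 1$, we have $u_t = \dot c\,\p_c\psi_c + v_t - (c+\dot\gamma)\p_y(\psi_c + v)$ and $\p_x^k u = \p_y^k(\psi_c + v)$ for $k \geq 1$. Inserting this into \eqref{kdv} and expanding $\p_x(u^2) = \p_y(\psi_c^2) + 2\p_y(\psi_c v) + \p_y(v^2)$, the terms involving only $\psi_c$ cancel via the travelling-wave relation $\p_y^3\psi_c + \p_y(\psi_c^2) = c\,\p_y\psi_c$. Splitting $c = c_0 + (c-c_0)$ in the surviving linear transport term isolates the reference operator $\p_y(-\p_y^2 + c_0 - 2\psi_c)$. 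Applying $I_{N(n)}$ then commutes past $\p_y$ and every $y$-independent coefficient; the sole obstruction is $\p_y(\psi_c v)$, which I split via
\begin{equation*}
I_{N(n)}\p_y(\psi_c v) = \p_y(\psi_c \tv_n) + \p_y\bigl(I_{N(n)}(\psi_c v) - \psi_c I_{N(n)} v\bigr).
\end{equation*}
Folding the first summand back into the reference operator $\p_y(-\p_y^2 + c_0 - 2\psi_c)\tv_n$ leaves exactly the commutator term that appears in \eqref{vequation}.

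Equation \eqref{wequation} then follows by multiplying \eqref{vequation} by $e^{ay}$. Because $e^{ay}$ is $t$-independent in the moving frame, $\p_t\tw_n = e^{ay}\p_t\tv_n$. I then conjugate each spatial operator by the weight using $e^{ay}\p_y e^{-ay} = \p_y - a$: the linear operator is left in its conjugated form $e^{ay}\p_y(-\p_y^2 + c_0 - 2\psi_c)e^{-ay}\tw_n$, the transport term becomes $(c - c_0 - \dot\gamma)(\p_y - a)\tw_n$, and the remaining forcing pieces (the nonlinearity, the commutator, and the modulation terms) simply inherit the prefactor $e^{ay}$ without further modification.

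The only step that requires care is the commutator decomposition above: recognizing that $I_{N(n)}$ fails to commute only with multiplication by the variable coefficient $\psi_c(y)$, and isolating that failure cleanly as $I_{N(n)}(\psi_c v) - \psi_c I_{N(n)} v$. Beyond this bookkeeping the derivation is a mechanical change of coordinates combined with the profile equation for $\psi_c$, so I expect no genuine obstacle.
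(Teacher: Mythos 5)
Your derivation is correct and follows essentially the same route as the paper, which simply quotes the Pego--Weinstein perturbation equations for $v$ and $w=e^{ay}v$ and then applies $I_{N(n)}$, the commutator $I_{N(n)}(\psi_c v)-\psi_c I_{N(n)}v$ arising exactly as you describe because $I_{N(n)}$ commutes with $\p_y$ and with constant coefficients but not with multiplication by $\psi_c$. The only bookkeeping points to watch are the factor of $2$ carried by $\p_y(\psi_c v)$ in the linearization, which should in principle propagate to the commutator term, and the overall sign conventions; the paper's own statement is loose about both, so your account is, if anything, more careful than the original.
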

\begin{proof} From \cite{MR1289328}, we have that 
\begin{equation*}
v_t=p_y(-\p_y^2+c_0-2\psi_c)v + \p_y(v^2) + (\dot{\gamma}\p_y + \dot{c}\p_c)\psi_c + (\dot{\gamma}+c-c_0)\p_yv
\end{equation*}
and
\begin{align*}
w_t&=e^{ay}\p_y(-\p_y^2+c_0-2u_c)e^{-ay}w + (c-c_0)(\p_y-a)w + [e^{ay}(\dot{c}\p_c + \dot{\gamma}\p_y]u_c \\ \qquad &+ \dot{\gamma}(\p_y-a)w + e^{ay}\p_y(c-c_0 + v^2)e^{-ay}w].
\end{align*}
The result here comes from applying $I$ to each equation.
\end{proof}

For fixed $c>0$, define the operator $A_a=e^{ay}\p_y(-\p_y^2+c-2\psi_c)e^{-ay}.$ We have the following from \cite{MR1289328},\cite{pigottraynorH1}:
\begin{proposition}\label{spectral} For $0 < a < \sqrt{\frac{c}{3}},$ the spectrum of $A_a$ in $H^1$ consists of the following:
\begin{enumerate}
\item An eigenvalue of algebraic multiplicity $2$ at $\lambda=0$.  A generator of the kernel of $A_a$ is $\zeta_1=e^{ay}\p_y\psi_c$, and the second generator of the generalized kernel of $A_a$ is $\zeta_2=e^{ay}\p_c\psi_c$.
\item A continuous spectrum $S^a$ parametrized by $\tau \to i\tau^3-3a\tau^2+(c-3a^2)i\tau-a(c-a^2)$.  For any element $\lambda$ of this continuous spectrum, the real part of $\lambda$ is at most $b:=-a(c-a^2)<0$.
\end{enumerate}
The spectrum contains no other elements.
\end{proposition}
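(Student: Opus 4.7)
The plan is to reduce the spectral problem to the Pego--Weinstein framework by first conjugating away the weight. Distributing the exponential factors through the derivatives shows that
\[
A_a = (\p_y - a)\bigl(-(\p_y - a)^2 + c - 2\psi_c\bigr),
\]
so $A_a$ is obtained from the standard linearized KdV operator $L := \p_y(-\p_y^2 + c - 2\psi_c)$ by the substitution $\p_y \mapsto \p_y - a$, and multiplication by $e^{ay}$ intertwines $L$ with $A_a$. This immediately transports whatever generalized eigenfunctions of $L$ decay fast enough at both ends into $H^1$ eigenfunctions of $A_a$.

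For the generalized kernel, I would appeal to two classical soliton identities: translation invariance of \eqref{kdv} gives $L\p_y\psi_c = 0$, and differentiating the profile equation $-\psi_c'' + c\psi_c - \psi_c^2 = 0$ in $c$ yields $L\p_c\psi_c = -\p_y\psi_c$. Multiplying by $e^{ay}$ produces $A_a\zeta_1 = 0$ and $A_a\zeta_2 = -\zeta_1$. Since $\psi_c$ and its derivatives decay like $e^{-\sqrt{c}|y|}$ and the hypothesis $0 < a < \sqrt{c/3} < \sqrt{c}$ guarantees $e^{ay}\p_y\psi_c, e^{ay}\p_c\psi_c \in H^1$, both $\zeta_1$ and $\zeta_2$ lie in $H^1$. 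To verify that the algebraic multiplicity is exactly two, one checks that the Jordan chain terminates, i.e.~that $A_a\zeta_3 = \zeta_2$ has no $H^1$ solution, using a solvability condition against an appropriate adjoint generator.

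For the essential spectrum, the exponential decay of $\psi_c$ makes $-2\psi_c$ (multiplied against $\p_y - a$) a relatively compact perturbation of the constant-coefficient operator $A_a^0 := (\p_y - a)\bigl(-(\p_y-a)^2 + c\bigr)$. Weyl's theorem then identifies the essential spectrum of $A_a$ with that of $A_a^0$, which is computed by Fourier transform: the symbol of $A_a^0$ at frequency $\tau$ is
\[
(i\tau - a)\bigl(-(i\tau-a)^2 + c\bigr) = i\tau^3 - 3a\tau^2 + (c - 3a^2)i\tau - a(c - a^2),
\]
parametrizing $S^a$. Its real part $-3a\tau^2 - a(c-a^2)$ attains its maximum $b = -a(c-a^2) < 0$ at $\tau = 0$, using $a > 0$ and $c > a^2$.

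The hard part is ruling out any additional isolated eigenvalues in $\{\mathrm{Re}\,\lambda > b\} \setminus \{0\}$. An Evans function argument is the natural tool: one builds an analytic function $D(\lambda)$ as a Wronskian of the solutions of $(A_a - \lambda)\phi = 0$ that decay at $+\infty$ against those decaying at $-\infty$, whose zeros in the allowed region correspond exactly to point eigenvalues of $A_a$. The explicit $\sech^2$ form of $\psi_c$ reduces the third-order eigenvalue ODE to a Lam\'e-type problem whose solutions are available in closed form, allowing one to show that $D$ has a double zero at $\lambda = 0$ matching the generalized kernel already identified, and no other zeros in the relevant region. Since this computation is carried out for precisely the operator $A_a$ in \cite{MR1289328} and reformulated in the present setting in \cite{pigottraynorH1}, I would invoke their conclusion directly rather than reprove it in full.
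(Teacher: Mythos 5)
The paper gives no proof of this proposition: it is quoted directly from Pego--Weinstein \cite{MR1289328} and the authors' companion paper \cite{pigottraynorH1}. Your outline correctly reconstructs the standard argument behind that citation (the conjugation identity $A_a=(\p_y-a)(-(\p_y-a)^2+c-2\psi_c)$, the Jordan chain from translation and scaling of the profile equation, the Fourier computation of the essential spectrum, and the Evans function for excluding further eigenvalues), and you defer to the same sources for the hard Evans function step, so your approach is essentially the paper's.
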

We also need to consider the elements of the spectrum to $A_a^*$, which are \\$\eta_1=e^{-ay}[\theta_1\p_y^{-1}\p_c\psi_c + \theta_2\psi_c]$ and $\eta_2=e^{-ay}(\theta_3\psi_c)$, where $\p_y^{-1}f$ is defined to be $\int_{-\infty}^yf(t)dt$ and $\theta_1$, $\theta_2$ and $\theta_3$ are appropriate constants to obtain the biorthogonality relationship $\langle \zeta_j,\eta_k\rangle=\delta_{jk}$.  We will define the $L^2$ spectral projections $Pw=\sum_{i=1}^2\langle w, \eta_i\rangle \zeta_i$ and $Qw=w-Pw$ onto the discrete and continuous spectrums of $A_a$ respectively, with respect to the fixed initial value of $c$, $c_0$. 

Returning to the difference equation \eqref{wequation}, for each fixed $t$ we select $\dot{c}_n(t)$ and $\dot{\gamma}_n(t)$ so that $P\tilde{w}_n=0$, and $Q\tilde{w}_n=\tilde{w}_n$.
Defining ${\tilde{\mathcal{F}}}=(c-c_0-\dot{\gamma})(\p_y-a)\tilde{w}-e^{ay}I_{N(n)}\p_y(v^2)-e^{ay} (\dot{c}\p_c + \dot{\gamma}\p_y)I_{N(n)}\psi_c-e^{ay}\p_y(I_{N(n)}(\psi_cv)-\psi_cI_{N(n)}v),$ and ${\tilde{\mathcal{G}}}= (c-c_0)(\p_y-a)\tilde{w}-e^{ay}I_{N(n)}\p_y(v^2) -e^{ay}\p_y(I_{N(n)}(\psi_cv)-\psi_cI_{N(n)}v)$ we have that
$$w_t = A_aw + Q{\tilde{\mathcal{F}}},$$ and
\begin{equation}\label{modulation}
\mathcal{A}\begin{bmatrix} \dot{\gamma} \\ \dot{c} \end{bmatrix} = \begin{bmatrix} \langle {\tilde{\mathcal{G}}},\eta_1 \rangle \\ \langle{\tilde{\mathcal{G}}}, \eta_2 \rangle \end{bmatrix},
\end{equation}
where $\mathcal{A}$ is the matrix
\begin{equation*}
\mathcal{A} = \begin{bmatrix} 1 + \langle e^{ay}(\p_y\psi_c-\p_y\psi_{c_0}),\eta_1 \rangle -\langle \tilde{w}_n, \p_y\eta_1 \rangle & \langle e^{ay}(\p_c\psi_c-\p_c\psi_{c_0}),\eta_1 \rangle \\ \langle  e^{ay}(\p_y\psi_c-\p_y\psi_{c_0}),\eta_2 \rangle-\langle \tilde{w}_N, \p_y\eta_2 \rangle & 1 + \langle e^{ay}(\p_c\psi_c-\p_c\psi_{c_0}),\eta_2 \rangle \end{bmatrix}.
\end{equation*}

\section{Linear and Multilinear Estimates}

In this section we will review the construction of the space $X^{s,1/2,1}$ and mention the linear estimates which were developed in \cite{pigottraynorH1}.  At the end of this section we prove a new bilinear estimate which is then used to establish a multilinear estimate that is necessary for the proof of Theorem \ref{main}. 

First, we provide a version of the product rule that holds with the multiplier operator $I$ in place of a derivative:
\begin{lemma}\label{expIproductrule}
Suppose that $\|e^{ay}f_i\|_{L^2} < \infty$ and $\|I_N\p_yf_i\|_{L^2}< \infty$ for $i=1,2$.  Then
$$\|e^{ay}I_N\p_y(f_1f_2)\|_{L^2}\leq 2\|I_Nf_1\|_{H^1}\|e^{ay}I_N\p_yf_2\|_{L^2} + 2\|I_Nf_2\|_{H^1}\|e^{ay}I_N\p_yf_1\|_{L^2}.$$
\end{lemma}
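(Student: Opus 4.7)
The plan is to prove this as a weighted version of the standard Fourier-side product rule for the $I_N$-multiplier, built on Leibniz's rule, the subadditivity of the \emph{modified multiplier} $p_N(\xi) := |\xi|\,m_N(\xi)$, and the elementary identity $e^{ay}(fg) = (e^{ay}f)g = f(e^{ay}g)$ for distributing the exponential weight. I would first split $\p_y(f_1f_2) = (\p_y f_1)f_2 + f_1\p_y f_2$ via Leibniz and, by symmetry, reduce to showing each half satisfies the appropriate bound. Introducing the conjugated operator $\tilde I_N := e^{ay} I_N e^{-ay}$, whose Fourier symbol $m_N(\xi-ia)$ is bounded uniformly on $\R$ for $a$ in the range given by Proposition \ref{spectral}, the problem becomes that of estimating the bilinear Fourier expression
\[
\widehat{e^{ay}I_N\p_y(f_1 f_2)}(\xi) = i(\xi-ia)\,m_N(\xi-ia)\int \hat f_1(\xi - ia - \eta)\,\hat f_2(\eta)\,d\eta.
\]

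The key algebraic input is the subadditivity of $p_N$: since $p_N(r)=r$ for $r\leq N$ and $p_N(r)=N^{1-s}r^s$ for $r\geq 10N$, the concavity of $r\mapsto r^s$ for $0<s<1$ gives $p_N(\xi)\leq C[p_N(\xi-\eta)+p_N(\eta)]$ via a three-case analysis over the frequency regimes $|\xi|\leq N$, $N<|\xi|<10N$, $|\xi|\geq 10N$. An analogous estimate holds for the complex-shifted symbol $|(\xi-ia)m_N(\xi-ia)|$ using $\xi-ia=(\xi-\eta)+(\eta-ia)$, at the cost of constants depending on $a$. Substituting into the bilinear integrand, splitting, and applying appropriate contour shifts ($\eta\to\mu-ia$) to rewrite each factor in its weighted form produces the two convolutions
\[
|\widehat{I_N\p_y f_1}|\ast|\widehat{e^{ay}f_2}| \quad\text{and}\quad |\hat f_1|\ast|\widehat{e^{ay}I_N\p_y f_2}|.
\]
A symmetric decomposition based on $e^{ay}(f_1f_2) = (e^{ay}f_1)f_2$ yields the matching pair with the roles of $f_1$ and $f_2$ swapped, so between the two splittings every target pairing $\|I_N f_j\|_{H^1}\,\|e^{ay}I_N\p_y f_i\|_{L^2}$ is produced.

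The convolutions are controlled via Plancherel and Young's inequality $L^1\ast L^2\hookrightarrow L^2$, together with the one-dimensional Sobolev bound $\|\hat g\|_{L^1}\leq \|m_N^{-1}\langle\xi\rangle^{-1}\|_{L^2}\,\|I_N g\|_{H^1}$, obtained by Cauchy--Schwarz applied to $|\hat g|=m_N^{-1}|\widehat{I_N g}|$; the weight $\|m_N^{-1}\langle\xi\rangle^{-1}\|_{L^2}$ is finite and uniformly bounded in $N$ for $s>1/2$, comfortably satisfied since $s>7/8$ in this paper. Combining the two splittings, selecting the efficient pairing in each frequency regime, and tracking constants through the subadditivity, Young's inequality, and Sobolev embedding yields the claimed inequality with the explicit factor of $2$. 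The main obstacle is verifying the subadditivity for the complex-shifted $p_N$, which requires careful case analysis in the transition region $|\xi|\sim N$ and for $|\xi|$ small compared to $a$; once the real-variable version is proved, the imaginary shift adds only lower-order corrections absorbable into the implicit constants.
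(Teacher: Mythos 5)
Your overall combinatorial strategy---distribute the symbol $p_N(\xi)=|\xi|\,m_N(\xi)$ over the two factors using its quasi-subadditivity, then put the weighted, differentiated factor in $L^2$ and the other factor in $L^\infty$ via $\|\hat g\|_{L^1}\lesssim\|I_Ng\|_{H^1}$---is the same as the paper's. But the mechanism you use to move the exponential weight into frequency space has a genuine gap. You define $\tilde I_N=e^{ay}I_Ne^{-ay}$ and assert it is a Fourier multiplier with symbol $m_N(\xi-ia)$ ``bounded uniformly on $\R$,'' and you later perform contour shifts $\eta\to\mu-ia$ inside integrands containing $m_N$. All of this requires $m_N$ to extend analytically to a strip $|\Im\zeta|\leq a$, which it cannot: $m_N$ is only a smooth function, equal to $1$ on $|\xi|<N$ and to the nonconstant function $|\xi|^{s-1}/N^{s-1}$ on $|\xi|>10N$, and by the identity theorem no function analytic in a neighborhood of $\R$ can agree with two different analytic expressions on two intervals. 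So $m_N(\xi-ia)$ is undefined, the displayed formula for $\widehat{e^{ay}I_N\p_y(f_1f_2)}$ has no meaning, and the ``subadditivity of the complex-shifted symbol'' that you yourself flag as the main obstacle is not merely delicate---it cannot be formulated. (A secondary point: the hypotheses give only one-sided exponential decay, so $\hat f_i$ extends analytically only to a half-strip, and your contour shifts would need that checked too.)

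The paper sidesteps complex frequencies entirely: it truncates the weight, setting $\omega_R=\chi_{\{y\leq R\}}e^{ay}$, so that $\widehat{\omega_R}$ is an honest function of a \emph{real} frequency variable; it then writes $\|\omega_RI_N\p_y(f_1f_2)\|_{L^2}$ by duality as a quadrilinear integral over the real hyperplane $\xi_1+\xi_2+\xi_3+\xi_4=0$, applies the real-variable bound $m(\xi_2+\xi_3)|\xi_2+\xi_3|\leq 2\,m(\xi_j)|\xi_j|$ on the region where $|\xi_j|=\max(|\xi_2|,|\xi_3|)$, undoes the convolution to obtain $\|f_2\,\omega_R\,I_N\p_yf_1\|_{L^2}\leq\|f_2\|_{L^\infty}\|\omega_RI_N\p_yf_1\|_{L^2}$, and finally lets $R\to\infty$ by monotone convergence. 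If you replace your conjugation and contour-shift step by this truncation, the remaining ingredients of your argument (subadditivity of the real symbol, Cauchy--Schwarz/Young, and the $L^\infty$ bound, which only needs $s>1/2$) go through and reproduce the paper's proof.
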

\begin{proof} Define $\omega_R(y)=\chi_{\{y \leq R\}}e^{ay}$, and consider $\|\omega_RI_N\p_y(f_1f_2)\|_{L^2}$.  Taking the Fourier transform and using duality, we find that this equals $$\sup_{\|f\|_{L^2} = 1} \int\int\int\limits_{\Gamma_{4}} \widehat{\omega_R}(\xi_1)m(\xi_2+\xi_3)(\xi_2+\xi_3)\hat{f}_1(\xi_2)\hat{f}_2(\xi_3)f(\xi_4),$$
where $\Gamma_{4} = \{ (\xi_{1}, \xi_{2}, \xi_{3}, \xi_{4}) \in \R^{4} \ \vert \ \xi_{1} + \xi_{2} + \xi_{3} + \xi_{4} = 0 \}$. 
Now, either $\xi_2+\xi_3 \leq 2 \xi_2$ or $\xi_2 + \xi_3 \leq 2 \xi_3$.  In the first case, note that $m(\xi_2 + \xi_3) (\xi_2+\xi_3) \leq 2 m(\xi_2)\xi_2$ by the properties of $m$, so we have, with $\xi_5=\xi_2+\xi_3$ and $\xi_6=\xi_1+\xi_5$,
\begin{align*}
\|\omega_RI_N\p_y(f_1f_2)\|_{L^2} & \leq 2\sup_{\|f\|_{L^2} = 1} \int\int\int\limits_{\Gamma_{4}} \widehat{\omega_R}(\xi_1)m(\xi_2)\xi_2\hat{f}_1(\xi_2)\hat{f}_2(\xi_3)f(\xi_4)\\
& = 2\sup_{\|f\|_{L^2} = 1} \int\int\int\limits_{\Gamma_{4}} \widehat{\omega_R}(\xi_1)\widehat{(I\p_yf_1)}(\xi_2)\hat{f}_2(\xi_3)f(\xi_4)\\
& = 2\sup_{\|f\|_{L^2} = 1} \int \int\limits_{\xi_1+\xi_5+\xi_4=0}\hat{f}_2(\xi_5)(\omega_R I\p_yf_1)\widehat{\ \ }(\xi_5)f(\xi_4)\\
& = 2\sup_{\|f\|_{L^2}=1}\int_{\xi_6+\xi_4=0}(f_2\omega_RI\p_yf_1)\widehat{\ \ }(\xi_6)f(\xi_4)\\
& \leq 2 \sup_{\|f\|_{L^2} = 1} \|f_2\omega_RI\p_yf_1\|_{L^2}\|f\|_{L^2}\\
& =2\|f_2\omega_RIp_yf_1\|_{L^2}\\
& \leq 2 \|f_2\|_{L^\infty}\|\omega_RIp_yf_1\|_{L^2}\\
& \leq 2\|f_2\|_{H^s}\|e^{ay}Ip_yf_1\|_{L^2}\\
& \leq 2\|I_Nf_2\|_{H^1}\|e^{ay}Ip_yf_1\|_{L^2}.
\end{align*}
By the symmetry between the two cases, we obtain in total that
$$\|\omega_RI_N\p_y(f_1f_2)\|_{L^2}  \leq  2\|I_Nf_2\|_{H^1}\|e^{ay}Ip_yf_1\|_{L^2}+ 2\|I_Nf_1\|_{H^1}\|e^{ay}Ip_yf_2\|_{L^2}.$$
Now, letting $R \to \infty$, since $\chi_{\{y < R\}}|e^{ay}I_Np_y(f_1f_2)(y)|^2$ is a pointwise-increasing function in $R$, by the Lebesgue monotone convergence theorem we see that
\begin{align*}
\|e^{ay}I_N\p_y(f_1f_2)\|_{L^2}^2 & = \int |e^{ay}I_Np_y(f_1f_2)(y)|^2dy \\
& = \lim_{R \to \infty} \int \chi_{\{y < R\}}|e^{ay}I_Np_y(f_1f_2)(y)|^2 \\
& = \lim_{R\to\infty}\|\omega_RI_N\p_y(f_1f_2)\|_{L^2}^2\\
& \leq \lim_{R\to\infty} (2\|I_Nf_2\|_{H^1}\|e^{ay}Ip_yf_1\|_{L^2}+ 2\|I_Nf_1\|_{H^1}\|e^{ay}Ip_yf_2\|_{L^2})^2\\
& = (2\|I_Nf_2\|_{H^1}\|e^{ay}Ip_yf_1\|_{L^2}+ 2\|I_Nf_1\|_{H^1}\|e^{ay}Ip_yf_2\|_{L^2})^2
\end{align*}
as claimed.
\end{proof}

We next recall the definition of the space $X^{s,1/2,1}$. We define the sets $A_{j}$ and $B_{k}$ by
\begin{align*}
A_{j} &:= \{ (\tau,\xi) \in \R^{2} \ \vert \  2^{j} \leq \langle \xi \rangle \leq 2^{j+1} \}, \qquad j \geq 0\\
B_{k} &:= \{ (\tau,\xi) \in \R^{2} \ \vert \  2^{k} \leq \langle \tau - \xi^{3} \rangle \leq 2^{k+1} \}, \qquad k \geq 0.
\end{align*}
For $s,b \in \R$, the space $X^{s,b,1}$ is defined to be the completion of the Schwartz class functions in the norm
\begin{equation*}
\| f \|_{X^{s,b,1}} := \left ( \sum_{j \geq 0} 2^{2sj} \left ( \sum_{k \geq 0} 2^{bk} \| \widetilde{f} \|_{L^{2}(A_{j} \cap B_{k})} \right )^{2} \right )^{1/2}.
\end{equation*}
In taking $b = 1/2$ we have the following embeddings:
\begin{equation*}
X^{s,1/2+} \hookrightarrow X^{s,1/2,1} \hookrightarrow C^{0}_{t} H^{s}_{x}.
\end{equation*}
We will work primarily in the spaces $X^{s,1/2,1}$ and $X^{s,-1/2,1}$, so we adopt the notation $X^{s} := X^{s,1/2,1}$ and $Y^{s} := X^{s,-1/2,1}$. 

The spaces $X^{s},Y^{s}$ were used in the case when $s = 1$ to prove local well-posedness for the perturbations $v$ and $w = e^{ay}v$ in $H^{1}(\R)$, see \cite{pigottraynorH1}. We review some of the features of these spaces that were used in the aforementioned local well-posedness arguments. Let $W_{1}(t)$ denote the standard Airy evolution,
\begin{equation*}
(W_{1}(t)f)\widehat{\ \ }(\xi) = e^{-it\xi^{3}} \widehat{f}(\xi).
\end{equation*}
Let $W_{2}(t)$ be the linear evolution defined for $t \geq 0$ by
\begin{equation*}
(W_{2}(t) f)\widehat{\ \ }(\xi) = e^{-it\xi^{3} - p_{a}(\xi) t} \widehat{f}(\xi),
\end{equation*}
where $p_{a}(\xi) = 3a\xi^{2} + a(c_{0}^{2} - a)$. We extend this to all of $t \in \R$ in defining
\begin{equation*}
(W_{2}(t) f)\widehat{\ \ }(\xi) = e^{-i\xi^{3}t - p_{a}(\xi)\vert t \vert} \widehat{f}(\xi).
\end{equation*}
While the Airy evolution $W_{1}(t)$ is the linear evolution associated with the unweighted perturbation $v$, the evolution $W_{2}(t)$ is the linear evolution associated with the weighted perturbation $w$. A key feature of the space $X^{s}$ is that it accommodates both of the semigroups $W_{1}(t)$ and $W_{2}(t)$, as illustrated in the following linear estimates which are valid for all $s \in \R$:
\begin{align}
\| \rho(t) W_{1}(t) f \|_{X^{s,1/2,1}} &\lesssim \| f \|_{H^{s}}, \label{W1homogeneouslinear},\\
\left \| \rho(t) \int_{0}^{t} W_{1}(t-s) F(s) ds \right \|_{X^{s,1/2,1}} &\lesssim \| F \|_{X^{s,-1/2,1}}, \label{W1nonhomogeneouslinear},\\
\intertext{and if $0 < a \leq \min(1,c_{0})$, then}
\| \rho(t) W_{2}(t) f \|_{X^{s,1/2,1}} &\lesssim \| f \|_{H^{s}}, \label{W2homogeneouslinear}\\
\left \| \chi_{\R_{+}}(t) \rho(t) \int_{0}^{t} W_{2}(t-s) F(s) ds \right \|_{X^{s,1/2,1}} &\lesssim \| F \|_{X^{s,-1/2,1}} \label{W2nonhomogeneouslinear}.
\end{align}
Here $\rho:\R \to \R$ is a cutoff function such that
\begin{equation}
\label{timecutoff}
\rho \in C^{\infty}_{0}(\R), \qquad \supp \rho \subset [-2,2], \qquad \rho \equiv 1 \ \text{on} \ [-1,1],
\end{equation}
and $\chi_{\R_{+}}$ is the indicator function for the set $\R_{+} := \{ t \in \R \ \vert \  t \geq 0 \}$. The estimates \eqref{W1homogeneouslinear}, \eqref{W1nonhomogeneouslinear} are proved in \cite{MR2501679} while the proofs of \eqref{W2homogeneouslinear}, \eqref{W2nonhomogeneouslinear} are given in \cite{pigottraynorH1}. Also crucial for the result proved in \cite{pigottraynorH1} was the following bilinear estimate, valid for all $s \geq 0$ (see Proposition 3 in \cite{pigottraynorH1}):
\begin{equation}
\label{bilinearestimate}
\| u v_{y} \|_{Y^{s}} \lesssim \| u \|_{X^{s}} \| v \|_{X^{s}}.
\end{equation}
In the case when $s = 1$ we have the following generalization of this result.

\begin{proposition}
\label{alphabilinear}
Let $\alpha_{1} \in (3/4,1], \alpha_{2} \in (0,1]$ and suppose that $u \in X^{\alpha_{1}}, v \in X^{\alpha_{2}}$. Then
\begin{equation}
\label{alphainequality}
\| u_{y} v \|_{Y^{1}} \lesssim \| u \|_{X^{\alpha_{1}}} \| v \|_{X^{\alpha_{2}}}.
\end{equation}
\end{proposition}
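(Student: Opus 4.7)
The plan is to adapt the proof of the bilinear estimate \eqref{bilinearestimate} from \cite{pigottraynorH1}, modifying the bookkeeping to exploit the asymmetry between the two input regularities $\alpha_1$ and $\alpha_2$. The strategy is the standard one of Kenig--Ponce--Vega and Tao: pass to the Fourier side via duality, decompose each factor dyadically in both frequency $N_i = 2^{j_i}$ and modulation $L_i = 2^{k_i}$, and exploit the KdV resonance identity
$$(\tau_1 - \xi_1^3) + (\tau_2 - \xi_2^3) + (\tau_3 - \xi_3^3) = -3\xi_1 \xi_2 \xi_3$$
on the diagonal $\xi_1 + \xi_2 + \xi_3 = 0$, $\tau_1 + \tau_2 + \tau_3 = 0$. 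This identity forces $\max_i L_i \gtrsim |\xi_1 \xi_2 \xi_3|$ and furnishes the decisive gain in the resonant interactions.

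By duality the inequality \eqref{alphainequality} reduces to bounding a trilinear form
$$I = \iint u_y\, v\, w\, dy\, dt$$
where $w$ lies in the predual of $Y^1$. Each dyadic contribution is estimated by placing the two factors with smaller modulations into $L^4_{t,y}$ via the Airy Strichartz bound $\| f_{N,L} \|_{L^4_{t,y}} \lesssim L^{1/2} \| \widetilde{f_{N,L}} \|_{L^2}$, and the third factor into $L^2_{t,y}$. The contributions are then organized according to three frequency regimes: high--low ($N_1 \gg N_2$), low--high ($N_1 \ll N_2$), and high--high ($N_1 \sim N_2$). In the first two regimes one has $N_3 \sim \max(N_1, N_2)$, so the derivative cost $N_1$ is absorbed by the output frequency weight, and the dyadic summation closes comfortably for any $\alpha_1, \alpha_2 > 0$.

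The main obstacle is the high--high-to-low regime $N_1 \sim N_2 \gg N_3$, where the derivative on $u$ contributes $N_1$ but the output frequency weight supplies only the smaller factor $N_3$. The deficit $N_1/N_3$ must be recovered from the resonance estimate $\max_i L_i \gtrsim N_1^2 N_3$, which in turn is absorbed against the $L_i^{1/2}$-weights intrinsic to the Bourgain-type norms. Depending on which of $L_1, L_2, L_3$ realizes the maximum, one applies the appropriate placement (the factor of maximal modulation going into $L^2_{t,y}$, the other two into $L^4_{t,y}$) and extracts the extra $L_i^{1/2}$-gain. After tracking the $N_1$-exponent that survives the application of the $N_1^{-\alpha_1}$ decay in the $X^{\alpha_1}$-norm of $u$ and the $N_2^{-\alpha_2}$ decay in the $X^{\alpha_2}$-norm of $v$, one finds that the dyadic summation over $N_1 \sim N_2$ converges precisely when $\alpha_1 > 3/4$. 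The hypothesis $\alpha_2 > 0$ is needed only to sum over the low-frequency output $N_3$, and any positive value of $\alpha_2$ suffices. The $\ell^1$-in-modulation structure built into the $X^{s,1/2,1}$ norms sidesteps the logarithmic losses that would otherwise appear at this endpoint.
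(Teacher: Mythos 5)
Your overall framework --- Fourier-side reduction by duality, dyadic decomposition in both frequency and modulation, the resonance identity $(\tau_1+\tau_2)-(\xi_1+\xi_2)^3-(\tau_1-\xi_1^3)-(\tau_2-\xi_2^3)=-3\xi\xi_1\xi_2$, bilinear Strichartz-type block estimates, and a case analysis by frequency interaction --- is exactly the paper's. But your identification of the critical interaction is wrong, and the error traces to a sign confusion about the output weight. The target norm is $Y^1=X^{1,-1/2,1}$, whose weight $\langle\xi\rangle^{+1}$ is a \emph{cost} proportional to the output frequency, not a gain that ``absorbs'' the derivative. Consequently, in the high--low regime $N_1\sim N_3\gg N_2$ (differentiated factor $u$ at high frequency, output at comparable frequency) you pay $N_1$ for $\partial_y$ \emph{and} another $N_1$ for the $Y^1$ weight, against only $N_1^{-\alpha_1}$ from $\|u\|_{X^{\alpha_1}}$, roughly $N_1^{-1}N_2^{-1/2}$ from the resonance bound $L_{\max}\gtrsim N_1^2N_2$, and $N_1^{-1/4}$ from the $L^4$-type block estimate; the net exponent is $N_1^{3/4-\alpha_1}$, and this regime --- the paper's Case (3), $|j-j_1|\le 10$ --- is precisely where $\alpha_1>3/4$ is needed. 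Your claim that this regime ``closes comfortably for any $\alpha_1,\alpha_2>0$'' is therefore false.

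Conversely, the high--high-to-low regime $N_1\sim N_2\gg N_3$, which you name as the main obstacle and the source of the threshold, is in fact harmless here: the $Y^1$ cost is only $N_3$, the supports of $\widehat{u}$ and $\widehat{v}$ are separated by $K\sim N_1$ so the sharper separated-support bilinear estimate supplies an extra $N_1^{-1/2}$, and the resonance gives $L_{\max}\gtrsim N_1^2N_3$; the paper's Case (2) closes with a net factor on the order of $N_3^{1/2}N_1^{-1/2-\alpha_1-\alpha_2}$, i.e.\ for all $\alpha_1,\alpha_2\ge 0$. You appear to be importing the $-3/4$ threshold from the standard estimate $\|\partial_x(uv)\|_{X^{s,-1/2}}\lesssim\|u\|_{X^{s,1/2}}\|v\|_{X^{s,1/2}}$, where high--high-to-low is indeed the enemy; the present estimate has fixed output regularity $1$ and a derivative on only one factor, so the numerology is different. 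As written, your argument would ``verify'' the condition in a case where it is not needed and wave through the case that actually fails for $\alpha_1\le 3/4$, so the proof does not stand without redoing the exponent bookkeeping in the interactions with $N_3\sim N_1$.
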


\begin{proof}
Since we work primarily in frequency space, we define $\widetilde{X}^{s,b,1}$ to be the completion of the Schwartz class functions in the norm
\begin{equation*}
\| f \|_{\widetilde{X}^{s,b,1}} := \left ( \sum_{j \geq 0} 2^{2sj} \left ( \sum_{k \geq 0} 2^{bk} \| f \|_{L^{2}(A_{j} \cap B_{k}) }\right )^{2} \right )^{1/2}.
\end{equation*}
Here $f = f(\tau, \xi)$ is a function of the frequency variables $\tau$ and $\xi$.
Adopting the notation $\widetilde{X}^{1} = \widetilde{X}^{1,1/2,1}$ and $\widetilde{Y}^{1} = \widetilde{X}^{1,-1/2,1}$, the estimate \eqref{alphainequality} reads
\begin{equation*}
\| ( \vert \xi_{1} \vert f ) \ast g \|_{\widetilde{Y}^{1}} \lesssim \| f \|_{\widetilde{X}^{\alpha_{1}}} \| g \|_{\widetilde{X}^{\alpha_{2}}}.
\end{equation*}

Following the proof of the standard bilinear estimate \eqref{bilinearestimate} we decompose $f$ and $g$ on dyadic blocks as follows: Define $f_{j_{1},k_{1}} := \chi_{A_{j_{1}}} \chi_{B_{k_{1}}} f$ and $g_{j_{2},k_{2}} := \chi_{A_{j_{2}}} \chi_{B_{k_{2}}} g$. We thus have
\begin{equation*}
f = \sum_{j_{1} \geq 0} \sum_{k_{1} \geq 0} f_{j_{1},k_{1}} \qquad \text{and} \qquad g = \sum_{j_{2} \geq 0} \sum_{k_{2} \geq 0} g_{j_{2},k_{2}}.
\end{equation*}
Our goal is to estimate
\begin{equation}
\label{bilinearLHS}
\sum_{j \geq 0} 2^{2j} \left ( \sum_{k \geq 0} \sum_{j_{1} \geq 0} \sum_{k_{1} \geq 0} \sum_{j_{2} \geq 0} \sum_{k_{2} \geq 0} 2^{-k/2} 2^{j_{1}} \| f_{j_{1},k_{1}} \ast g_{j_{2},k_{2}} \|_{L^{2}(A_{j} \cap B_{k})} \right )^{2}.
\end{equation}
Indeed, we wish to establish an estimate of the form
\begin{equation*}
\text{\eqref{bilinearLHS}} \lesssim \| f \|_{\widetilde{X}^{\alpha_{1}}}^{2} \| g \|_{\widetilde{X}^{\alpha_{2}}}^{2}.
\end{equation*}
To simplify the exposition we adopt the following notation:
\begin{align*}
F_{j_{1},k_{1}} &:= 2^{\alpha_{1}j_{1}} 2^{k_{1}/2} \| f_{j_{1},k_{1}} \|_{L^{2}}, \qquad \text{and}\\
G_{j_{2},k_{2}} &:= 2^{\alpha_{2} j_{2}} 2^{k_{2}/2} \| g_{j_{2},k_{2}} \|_{L^{2}}.
\end{align*}
The proof is divided into the following cases:
\begin{enumerate}
\item At least two of $j, j_{1}, j_{2}$ are less than 20.
\item $j_{1}, j_{2} \geq 20$ and $j < j_{1} -10$.
\item $j, j_{1} \geq 20, \vert j - j_{1} \vert \leq 10$.
\end{enumerate}

\smallskip

\noindent{\textbf{Case (1).}} Here we may assume that $j, j_{1}, j_{2} \leq 30$. Applying Young's inequality followed by H\"{o}lder's inequality yields
\begin{equation*}
\| f_{j_{1},k_{1}} \ast g_{j_{2},k_{2}} \|_{L^{2}} \lesssim 2^{j_{2}/2} 2^{15k_{1}/32} 2^{15k_{2}/32} \| f_{j_{1},k_{1}} \|_{L^{2}} \| g_{j_{2},k_{2}} \|_{L^{2}}.
\end{equation*}
After summing in $k$ and summing over $j$ (a finite sum), we find that
\begin{equation*}
\text{\eqref{bilinearLHS}} \lesssim \left ( \sum_{j_{1}=0}^{30} \sum_{k_{1} \geq 0} 2^{j_{1}} 2^{15k_{1}/32} \| f_{j_{1},k_{1}} \|_{L^{2}} \right )^{2} \left ( \sum_{j_{2} = 0}^{30} \sum_{k_{2} \geq 0} 2^{j_{2}/2} 2^{15k_{2}/32} \| g_{j_{2},k_{2}} \|_{L^{2}} \right )^{2}.
\end{equation*}
Note that the sum in $j_{2}$ is finite, so
\begin{align*}
& \sum_{j_{2} = 0}^{30} \sum_{k_{2} \geq 0} 2^{j_{2}/2} 2^{15k_{2}/32} \| g_{j_{2},k_{2}} \|_{L^{2}}\\
\leq & \left ( \sum_{j_{2} = 0}^{30} 2^{(1-2\alpha_{2})j_{2}} \right )^{1/2} \left ( \sum_{j_{2} = 0}^{30}  2^{2 \alpha_{2} j_{2}}\left ( \sum_{k_{2} \geq 0}  G_{j_{2},k_{2}} \right )^{2} \right )^{1/2}\\
\lesssim & \| g \|_{\widetilde{X}^{\alpha_{2}}}.
\end{align*}
A similar argument shows that
\begin{equation*}
\sum_{j_{1} = 0}^{30} \sum_{k_{1} \geq 0} 2^{j_{1}} 2^{15k_{1}/32} \| f_{j_{1},k_{1}} \|_{L^{2}} \lesssim \| f \|_{\widetilde{X}^{\alpha_{1}}},
\end{equation*}
which completes the argument.

\smallskip

\noindent{\textbf{Case (2).}} We may assume that $\vert j_{1} - j_{2} \vert \leq 1$, since otherwise $f_{j_{1}} \ast g_{j_{2}} = 0$ on $A_{j}$. For $(\tau_{1},\xi_{1}) \in A_{j_{1}} \cap B_{k_{1}}$ and $(\tau_{2},\xi_{2}) \in A_{j_{2}} \cap B_{k_{2}}$ we have
\begin{equation}
\label{cubicrelation}
(\tau_{1} + \tau_{2}) - (\xi_{1} + \xi_{2})^{3} - (\tau_{1} - \xi_{1}^{3}) - (\tau_{2} - \xi_{2}^{3}) = -3 \xi \xi_{1} \xi_{2}.
\end{equation}
It follows that $f_{j_{1},k_{1}} \ast g_{j_{2},k_{2}} = 0$ on $A_{j} \cap B_{k}$ unless $2^{k_{max}} \gtrsim 2^{j} 2^{j_{1}} 2^{j_{2}} \sim 2^{j + 2j_{1}}$ where $k_{max} = \max \{ k,k_{1},k_{2} \}$.

Suppose that $k = k_{max}$.
In order for $f_{j_{1},k_{1}} \ast g_{j_{2},k_{2}}$ to have low frequency support we require that whenever $(\tau_{1},\xi_{1}) \in \supp f_{j_{1},k_{1}}, (\tau_{2},\xi_{2}) \in \supp g_{j_{2},k_{2}}$, $\xi_{1}$ and $\xi_{2}$ must have opposite signs. It follows that $\supp f_{j_{1}}$ and $\supp g_{j_{2}}$ are separated by $K \sim 2^{j_{1}}$. In light of Lemma 3.3 in \cite{pigottraynorH1}, we thus have
\begin{equation*}
\| f_{j_{1},k_{1}} \ast g_{j_{2},k_{2}} \|_{L^{2}(A_{j} \cap B_{k})} \lesssim 2^{-j/2} 2^{-j_{1}/2} 2^{-\alpha_{1}j_{1}} 2^{-\alpha_{2}j_{2}} F_{j_{1},k_{1}} G_{j_{2},k_{2}}.
\end{equation*}
Therefore, using $2^{-k/2} \lesssim 2^{-j/2 - j_{1}}$, we have
\begin{align*}
\text{\eqref{bilinearLHS}} &\lesssim \sum_{j \geq 0} \left ( \sum_{j_{1} \geq j + 11} \sum_{k_{1} \geq 0} \sum_{j_{2} = j_{1} - 1}^{j_{1} + 1} \sum_{k_{2} \geq 0} 2^{-j_{1}/2} 2^{-\alpha_{1} j_{1}} 2^{-\alpha_{2}j_{2}} F_{j_{1},k_{1}} G_{j_{2},k_{2}} \right )^{2}\\
&\lesssim \sum_{j \geq 0} 2^{-j/2} \left ( \sum_{j_{1} \geq 0} \sum_{k_{1} \geq 0} \sum_{j_{2} \geq 0} \sum_{k_{2} \geq 0} 2^{-j_{1}/8 - \alpha_{1}j_{1}} 2^{-j_{2}/8 - \alpha_{2}j_{2}} F_{j_{1},k_{1}} G_{j_{2},k_{2}} \right )^{2}\\
&\lesssim \| f \|_{\widetilde{X}^{\alpha_{1}}}^{2} \| g \|_{\widetilde{X}^{\alpha_{2}}}^{2}.
\end{align*}

Next we suppose that $k_{1} = k_{max}$. In this case we require $2^{k_{1}} \gtrsim 2^{j + 2j_{1}}$. We apply Lemma 3.4 from \cite{pigottraynorH1} with $K \sim 2^{j_{1}}$ to see that
\begin{equation*}
\| f_{j_{1},k_{1}} \ast g_{j_{2},k_{2}} \|_{L^{2}(A_{j} \cap B_{k})} \lesssim 2^{k/2} 2^{-j_{1}} 2^{-k_{1}/2} 2^{-\alpha_{1} j_{1}} 2^{-\alpha_{2} j_{2}} F_{j_{1}, k_{1}} G_{j_{2}, k_{2}}.
\end{equation*}
Observe that
\begin{equation*}
2^{-k_{1}/2} \lesssim 2^{-k/16} 2^{-7k_{1}/16} \lesssim 2^{-k/16} 2^{-7j/16} 2^{-7j_{1}/8}.
\end{equation*}
It follows that
\begin{align*}
\text{\eqref{bilinearLHS}}
&\lesssim \sum_{j \geq 0} 2^{-j/16} \left ( \sum_{\substack{j_{1} \geq 0\\ k_{1} \geq 0}}  \sum_{\substack{j_{2} \geq 0\\ k_{2} \geq 0}}  2^{-j_{1}/8} 2^{-j_{2}/8} 2^{-\alpha_{1} j_{1}} 2^{-\alpha_{2} j_{2}} F_{j_{1},k_{1}} G_{j_{2},k_{2}} \right )^{2}\\
&\lesssim \| f \|_{\widetilde{X}^{\alpha_{1}}}^{2} \| g \|_{\widetilde{X}^{\alpha_{2}}}^{2}.
\end{align*}

Finally we consider the case when $k_{2} = k_{max}$. Since the expression to be estimated is symmetric in $(j_{1},k_{1})$ and $(j_{2},k_{2})$, we can argue as in the case where $k_{1} = k_{max}$ to obtain the desired estimate.

\smallskip

\noindent{\textbf{Case (3).}} In this case we may assume that $j_{2} \leq j + 11$. In light of \eqref{cubicrelation} we require $2^{k_{max}} \gtrsim 2^{2j + j_{2}}$. We begin by assuming that $k = k_{max}$. Lemma 3.3 from \cite{pigottraynorH1} gives
\begin{equation*}
\| f_{j_{1},k_{1}} \ast g_{j_{2},k_{2}} \|_{L^{2}(A_{j} \cap B_{k})} \lesssim 2^{-j/4} 2^{-\alpha_{1} j_{1}} 2^{-\alpha_{2} j_{2}} F_{j_{1},k_{1}} G_{j_{2},k_{2}}.
\end{equation*}
Therefore, since $2^{-k/2} \lesssim 2^{-j - j_{2}/2}$, we find
\begin{align*}
\text{\eqref{bilinearLHS}} 
&\lesssim \sum_{j \geq 0} 2^{-j \epsilon} \left ( \sum_{j_{1} = j - 10}^{j + 10} \sum_{k_{1} \geq 0} \sum_{j_{2} = 0}^{j+11} \sum_{k_{2} \geq 0} 2^{j_{1}(\frac{3}{4} - \alpha_{1} + )} 2^{(-\alpha_{2} -1/2)j_{2}} F_{j_{1},k_{1}} G_{j_{2},k_{2}} \right )^{2}\\
&\lesssim \| f \|_{\widetilde{X}^{\alpha_{1}}}^{2} \| g \|_{\widetilde{X}^{\alpha_{2}}}^{2},
\end{align*}
provided $\alpha_{1} > 3/4$ and $\epsilon>0$ is chosen appropriately small.

Suppose that $k_{1} = k_{max}$, meaning that $2^{k_{1}} \gtrsim 2^{2j + j_{2}}$. We apply Lemma 3.4 from \cite{pigottraynorH1} to estimate
\begin{equation*}
\| f_{j_{1},k_{1}} \ast g_{j_{2},k_{2}} \|_{L^{2}(A_{j} \cap B_{k})} \lesssim 2^{k/4} 2^{-j_{1}/4} 2^{-\alpha_{1} j_{1}} 2^{-\alpha_{2} j_{2}} 2^{-k_{1}/2} F_{j_{1},k_{1}} G_{j_{2},k_{2}}.
\end{equation*}
After using $2^{-k_{1}/2} \lesssim 2^{-j-j_{2}/2}$
\begin{align*}
\text{\eqref{bilinearLHS}} 
&\lesssim \sum_{j \geq 0} 2^{-j\epsilon} \left ( \sum_{j_{1} \geq 0} \sum_{k_{1} \geq 0} \sum_{j_{2} \geq 0} \sum_{k_{2} \geq 0} 2^{j_{1}(-\alpha_{1} + \epsilon + \frac{3}{4})} 2^{-j_{2}/2} 2^{-\alpha_{2} j_{2}} F_{j_{1},k_{1}} G_{j_{2},k_{2}} \right )^{2}\\
&\lesssim \| f \|_{\widetilde{X}^{\alpha_{1}}}^{2} \| g \|_{\widetilde{X}^{\alpha_{2}}}^{2},
\end{align*}
again provided $\alpha_{1} > 3/4$ and $\epsilon > 0$ is chosen to be sufficiently small.

Finally we consider the case for which $k_{2} = k_{max}$, so that $2^{k_{2}} \gtrsim 2^{2j + j_{2}}$. We divide our analysis into the following two subcases:
\begin{enumerate}
\item[(i)] $\vert j_{2} - j \vert \leq 5$.
\item[(ii)] $\vert j_{2} - j \vert > 5$.
\end{enumerate}
In case (i) we use Lemma 3.4 from \cite{pigottraynorH1} to estimate
\begin{equation*}
\| f_{j_{1},k_{1}} \ast g_{j_{2},k_{2}} \|_{L^{2}(A_{j} \cap B_{k})} \lesssim 2^{k/4} 2^{-j_{2}/4} 2^{-k_{2}/2} 2^{-\alpha_{1} j_{1}} 2^{-\alpha_{2} j_{2}} F_{j_{1},k_{1}} G_{j_{2},k_{2}}.
\end{equation*}
We thus obtain
\begin{align*}
\text{\eqref{bilinearLHS}}
&\lesssim \sum_{j \geq 0} \left ( \sum_{j_{1} = j-10}^{j+10} \sum_{k_{1} = 0}^{k_{2}} \sum_{\substack{j_{2} \geq 0\\ \vert j - j_{2} \vert \leq 5}} \sum_{k_{2} \geq 0} 2^{j_{1}} 2^{-3j_{2}/4} 2^{-\alpha_{1} j_{1}} 2^{-\alpha_{2} j_{2}} F_{j_{1},k_{1}} G_{j_{2},k_{2}} \right )^{2}\\
&\lesssim \sum_{j \geq 0} 2^{-j/2} \left ( \sum_{j_{1} \geq 0} \sum_{k_{1} \geq 0} \sum_{j_{2} \geq 0} \sum_{k_{2} \geq 0} 2^{j_{1}(-1/4 - \alpha_{1})} 2^{j_{2}(-1/4 - \alpha_{2})} F_{j_{1},k_{1}} G_{j_{2},k_{2}} \right )^{2}\\
&\lesssim \| f \|_{\widetilde{X}^{\alpha_{1}}}^{2} \| g \|_{\widetilde{X}^{\alpha_{2}}}^{2}.
\end{align*}
In case (ii) we again use Lemma 3.4 with $K \sim 2^{j}$ to estimate
\begin{equation*}
\| f_{j_{1},k_{1}} \ast g_{j_{2},k_{2}} \|_{L^{2}(A_{j} \cap B_{k})} \lesssim 2^{k/2} 2^{-j/2} 2^{-j_{2}/2} 2^{-k_{2}/2} 2^{-\alpha_{1}j_{1}} 2^{-\alpha_{2} j_{2}} F_{j_{1},k_{1}} G_{j_{2},k_{2}}.
\end{equation*}
Next we estimate
\begin{equation*}
2^{-k_{2}/2} \lesssim 2^{-k/16} 2^{-7k_{2}/16} \lesssim 2^{-k/16} 2^{-7j/8} 2^{-7j_{2}/16}.
\end{equation*}
We thus find that
\begin{align*}
\text{\eqref{bilinearLHS}} &\lesssim \sum_{j \geq 0} \left ( \sum_{j_{1} = j-10}^{j+10} \sum_{k_{1} \geq 0} \sum_{j_{2} = 0}^{j-5} \sum_{k_{2} \geq 0} 2^{j_{1}} 2^{-3j/8} 2^{-15j_{2}/16} 2^{-\alpha_{1} j_{1}} 2^{-\alpha_{2} j_{2}} F_{j_{1},k_{1}} G_{j_{2},k_{2}} \right )^{2}\\
&\lesssim \sum_{j \geq 0} 2^{-j/8} \left ( \sum_{j_{1} \geq 0} \sum_{k_{1} \geq 0} \sum_{j_{2} \geq 0} \sum_{k_{2} \geq 0} 2^{j_{1}(-\alpha_{1} + 9/16)} 2^{j_{2}(-\alpha_{2} -15/16)}  F_{j_{1},k_{1}} G_{j_{2},k_{2}} \right )^{2}\\
&\lesssim \| f \|_{\widetilde{X}^{\alpha_{1}}}^{2} \| g \|_{\widetilde{X}^{\alpha_{2}}}^{2},
\end{align*}
since $\alpha_{1} > 3/4$.
\end{proof}

In the proof of the modified local well-posedness result we will require the following estimate.

\begin{proposition}
\label{Imethoddecay}
Let $s > 7/8$. Suppose that $u$ and $v$ are spacetime functions such that $u,v \in X^{s}$ and $e^{ay} Iu, e^{ay} Iv \in X^{1}$. Then
\begin{equation}
\label{decayestimate}
\begin{aligned}
&\left \| e^{ay} \partial_{y} \Big ( I(uv) - Iu Iv \Big ) \right \|_{Y^{1}}\\ 
\lesssim &N^{\frac{3}{4} - s +} \left ( \| e^{ay} Iu \|_{X^{1}} \| Iv \|_{X^{1}} + \| Iu \|_{X^{1}} \| e^{ay} Iv \|_{X^{1}} \right ).
\end{aligned}
\end{equation}
\end{proposition}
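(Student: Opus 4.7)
The plan is to mirror the proofs of the standard bilinear estimate \eqref{bilinearestimate} and Proposition \ref{alphabilinear}, while combining them with the weight-transfer device from Lemma \ref{expIproductrule}. First I would replace $e^{ay}$ by its truncation $\omega_R(y) = \chi_{\{y \leq R\}} e^{ay}$, establish the desired estimate with $\omega_R$ in place of $e^{ay}$ uniformly in $R$, and then pass to the limit via the Lebesgue monotone convergence theorem, exactly as at the end of the proof of Lemma \ref{expIproductrule}. Passing to Fourier space and dualizing, the problem reduces to controlling a four-linear convolution whose symbol is $(\xi_1+\xi_2)\mu(\xi_1,\xi_2)$, where
\[
\mu(\xi_1,\xi_2) := m_N(\xi_1+\xi_2) - m_N(\xi_1)\, m_N(\xi_2),
\]
paired against the Fourier transforms of $u$, $v$, $\omega_R$, and a dual $Y^1$ test function on the hyperplane $\xi_0+\xi_1+\xi_2+\xi_3=0$.

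I would next decompose the $(\xi_1,\xi_2)$-integration into three regimes according to the sizes of the input frequencies relative to $N$: (L) both $|\xi_1|,|\xi_2|\ll N$, where $\mu\equiv 0$ by the definition of $m_N$ and the contribution vanishes; (HL) the high-low regime where, without loss of generality, $|\xi_1|\gtrsim N\gg|\xi_2|$; and (HH) the high-high regime in which $|\xi_1|,|\xi_2|\gtrsim N$. In (HL), since $m_N(\xi_2)=1$, I would exploit the mean value theorem
\[
\mu(\xi_1,\xi_2) = m_N(\xi_1+\xi_2) - m_N(\xi_1) = \xi_2\int_0^1 m_N'(\xi_1+t\xi_2)\,dt,
\]
together with the pointwise bound $|m_N'(\xi)|\lesssim m_N(\xi)/|\xi|$ valid for $|\xi|\gtrsim N$, to extract the cancellation $|\mu(\xi_1,\xi_2)|\lesssim (|\xi_2|/|\xi_1|)\,m_N(\xi_1)$. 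In (HH) I would use only the crude bound $|\mu(\xi_1,\xi_2)|\lesssim m_N(\xi_1)m_N(\xi_2)+m_N(\xi_1+\xi_2)$. In every non-vanishing subcase I would then transfer $\omega_R$ onto whichever of $Iu,Iv$ carries the high frequency, following the convolution rearrangement from the proof of Lemma \ref{expIproductrule}, so that after $R\to\infty$ the weight $e^{ay}$ lands on the high-frequency $Iu$ or $Iv$ and produces the $\|e^{ay}Iu\|_{X^1}$ or $\|e^{ay}Iv\|_{X^1}$ factor on the right-hand side of \eqref{decayestimate}.

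The remaining frequency-localized bilinear integrals would then be controlled by invoking Proposition \ref{alphabilinear} with parameters $\alpha_1\in(3/4,1]$ and $\alpha_2\in(0,1]$ chosen to match each regime. The factor $N^{3/4-s+}$ arises from the standard $I$-method bookkeeping: passing from a $|\xi|^s$ weight (available from the $X^s$ hypothesis via $m_N(\xi)|\xi|\sim N^{s-1}|\xi|^s$ at high frequency) to the $m_N(\xi)|\xi|$ weight present in the $\|\cdot\|_{X^1}$ norms on the right costs a power $N^{s-1}$, while the multiplier cancellation in (HL) and the frequency rearrangement in (HH) deliver a matching power $N^{1-s+}$ up to the $\epsilon$-loss built into Case (3) of Proposition \ref{alphabilinear}, leaving the advertised residual $N^{3/4-s+}$. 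The principal obstacle I anticipate is the (HH) regime at the balanced scale $|\xi_1|\sim|\xi_2|\sim N$: here $\mu$ provides no multiplicative smallness and all of the gain must come from the cubic resonance identity \eqref{cubicrelation} processed through Case (3) of Proposition \ref{alphabilinear}; it is precisely this case that forces the hypothesis $s>7/8$, since Proposition \ref{alphabilinear} only yields a useful bound when $\alpha_1>3/4$, and after accounting for the $I$-operator scaling this translates into the displayed lower bound on $s$.
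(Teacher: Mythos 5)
Your proposal follows essentially the same route as the paper's proof: truncating the weight to $\omega_{a,R}$ and passing to the limit, decomposing into the three frequency regimes relative to $N$ (vanishing, high-low with the mean value theorem applied to the commutator symbol, and high-high with the crude splitting into the $m(\xi_2+\xi_3)$ and $m(\xi_2)m(\xi_3)$ pieces), transferring the weight to the high-frequency factor, and closing with Proposition \ref{alphabilinear}, with the balanced high-high case and the $\alpha_1>3/4$ threshold producing the $s>7/8$ restriction exactly as in the paper. The only soft spot is your heuristic accounting for the residual power (as stated, $N^{s-1}\cdot N^{1-s+}$ would give $N^{0+}$ rather than $N^{3/4-s+}$; the paper instead extracts it from the bound $\langle N_2\rangle^{s-3/4-}m(N_3)\gtrsim N^{s-3/4-}$), but this is a bookkeeping detail rather than a structural difference.
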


\begin{remark}
Since $s > 7/8$ we see that \eqref{decayestimate} implies
\begin{align*}
&\left \| e^{ay} \partial_{y} \Big ( I(uv) - Iu Iv \Big ) \right \|_{Y^{1}}\\ \lesssim &N^{-1/8 +} \left ( \| e^{ay} Iu \|_{X^{1}} \| Iv \|_{X^{1}} + \| Iu \|_{X^{1}} \| e^{ay} Iv \|_{X^{1}} \right ).
\end{align*}
\end{remark}

\begin{proof}[Proof of Proposition \ref{Imethoddecay}]
For a function $u(t,x)$ of spacetime we let $u_{N_{j}}$ denote the function whose Fourier transform is given by $\widehat{u}_{N_{j}} = \eta_{A_{j}}(\xi) \widehat{u}(\xi)$, where $\eta_{A_{j}}$ is a smooth cutoff function adapted to the set $A_{j} := \{ \xi \in \R \ \vert \  \vert \xi \vert \sim N_{j} \}$ with $N_{j}$ dyadic. 

We truncate the exponential weight using a spatial cutoff function. Specifically, for $R > 1$ we let $\vartheta_{R}:\R \to \R$ by
\begin{equation*}
\vartheta_{R}(y) = \left \{ \begin{array}{l l}
1, & y < R\\
0, & y > R,
\end{array}
\right .
\end{equation*}
and define $\omega_{a,R}(y) := \vartheta_{R}(y) e^{ay}$. Observe that $\omega_{a,R} \in H^{s}(\R)$ for all $s \in \R$; in particular, it makes sense to speak of the Fourier transform of $\omega_{a,R}$. Furthermore, we have the following approximation result.

\begin{lemma}
\label{omegaaRH1}
If $f \in H^{1}_{a}(\R)$, then
\begin{equation*}
\lim_{R \to \infty} \| \omega_{a,R} f \|_{H^{1}} = \| e^{ay} f \|_{H^{1}}.
\end{equation*}
\end{lemma}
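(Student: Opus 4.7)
The plan is to prove the stronger statement that $\omega_{a,R} f \to e^{ay}f$ in $H^1(\R)$ as $R \to \infty$, from which convergence of the norms is immediate by the triangle inequality. This reduces to two subclaims: $\omega_{a,R} f \to e^{ay}f$ in $L^2$, and $(\omega_{a,R}f)' \to (e^{ay}f)'$ in $L^2$, both of which rest on the dominated convergence theorem together with a localization argument for the term produced by differentiating the cutoff.

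For the $L^2$ convergence, observe that $|\omega_{a,R}(y)f(y)| \leq e^{ay}|f(y)|$ for every $R$ and $y$, with the majorant in $L^2$ by the hypothesis $f \in H^1_a$. Since $\vartheta_R(y) \to 1$ as $R \to \infty$ for each fixed $y$, the integrand converges pointwise to $e^{ay}f(y)$ almost everywhere, and the $L^2$ limit follows from the dominated convergence theorem.

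For the derivative, the product rule yields
$$(\omega_{a,R}f)'(y) = \vartheta_R'(y)\,e^{ay} f(y) + \vartheta_R(y)\,(e^{ay}f)'(y).$$
The second term converges to $(e^{ay}f)'$ in $L^2$ by dominated convergence, using $|\vartheta_R| \leq 1$, $\vartheta_R \to 1$ pointwise, and $(e^{ay}f)' \in L^2$. For the first term, I take the cutoff of the form $\vartheta_R(y) = \vartheta(y - R)$ for a fixed smooth $\vartheta$ equal to $1$ on $(-\infty,0]$ and $0$ on $[1,\infty)$, so that $\vartheta_R'$ is supported in $[R, R+1]$ with $\|\vartheta_R'\|_{\infty} = \|\vartheta'\|_{\infty}$ independent of $R$. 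This gives
$$\|\vartheta_R'\,e^{ay} f\|_{L^2}^2 \leq \|\vartheta'\|_{\infty}^2 \int_R^{R+1} e^{2ay}|f(y)|^2\, dy \longrightarrow 0$$
as $R \to \infty$, since $e^{ay}f \in L^2(\R)$. The main (and essentially only) subtlety is this commutator term: the naive bound $\|\vartheta_R'\|_{L^{\infty}}\|e^{ay}f\|_{L^2}$ fails to decay, so one must instead exploit the localization of $\vartheta_R'$ in a window of bounded width far out in the tail, where the absolute continuity of the Lebesgue integral forces the $L^2$ mass of $e^{ay}f$ to vanish.
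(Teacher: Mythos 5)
Your proof is correct, and it takes a genuinely different (and more careful) route than the paper's. The paper keeps the sharp truncation $\vartheta_{R} = \chi_{\{y < R\}}$ and argues via the monotone convergence theorem, exactly as in the proof of Lemma \ref{expIproductrule}, applied separately to $\| \omega_{a,R} f \|_{L^{2}}$ and to $\| \omega_{a,R}(af + f_{y}) \|_{L^{2}}$, and then invokes the algebraic identity $\| e^{ay} f \|_{H^{1}}^{2} = \| e^{ay} f \|_{L^{2}}^{2} + \| e^{ay}(af + f_{y}) \|_{L^{2}}^{2}$ together with its truncated analogue; this produces only convergence of the norms and quietly discards the contribution of $\vartheta_{R}'$, which for a literal indicator function is a point mass at $y = R$ (so that $\omega_{a,R} f$ is not actually in $H^{1}$ unless $f(R) = 0$). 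You instead work with a smooth cutoff --- which is in fact the reading forced by the paper's own assertion that $\omega_{a,R} \in H^{s}$ for all $s$, even though the displayed definition of $\vartheta_{R}$ looks sharp --- and prove the stronger statement that $\omega_{a,R} f \to e^{ay} f$ in $H^{1}$, from which norm convergence follows by the reverse triangle inequality. The price of the smooth cutoff is precisely the commutator term $\vartheta_{R}' e^{ay} f$ that the paper's identity suppresses, and you handle it correctly: $\vartheta_{R}'$ lives in a window of bounded width with a uniform sup bound, so its contribution is controlled by the $L^{2}$ mass of $e^{ay} f$ on $[R, R+1]$, which vanishes as $R \to \infty$ since $e^{ay} f \in L^{2}$. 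Your version buys strong $H^{1}$ convergence and an honest accounting of the cutoff's derivative; the paper's version is shorter because monotone convergence needs no majorant and the sharp cutoff (formally) produces no derivative term. Both yield the stated conclusion, and your argument is the one I would keep.
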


\begin{proof}
Arguing as in the proof of Lemma \ref{expIproductrule}, we find that
\begin{equation}
\label{omegaaRconvergenceL2}
\lim_{R \to \infty} \| \omega_{a,R} f \|_{L^{2}} = \| e^{ay} f \|_{L^{2}}.
\end{equation}
Observe that $\| e^{ay} f \|_{H^{1}}^{2} = \| e^{ay} f \|_{L^{2}}^{2} + \| e^{ay} (af + f_{y}) \|_{L^{2}}^{2}$. One also checks that
\begin{equation*}
\| \omega_{a,R} f \|_{H^{1}}^{2} = \| \omega_{a,R} f \|_{L^{2}}^{2} + \| \omega_{a,R} (af + f_{y}) \|_{L^{2}}^{2}.
\end{equation*}
In light of this calculation and \eqref{omegaaRconvergenceL2}, we obtain the conclusion of the lemma.
\end{proof}

To prove \eqref{decayestimate} it suffices to show that
\begin{equation}
\label{decayestimatesufficient}
\begin{aligned}
&\| \widehat{g}_{N_{1}} \vert \xi_{2} + \xi_{3} \vert \big ( m(\xi_{2} + \xi_{3}) - m(\xi_{2}) m(\xi_{3}) \big ) \widehat{u}_{N_{2}} \widehat{v}_{N_{3}} \|_{\widetilde{Y}^{1}} \\
\lesssim & N^{\frac{3}{4} - s +} \Big ( N_{12}^{0-} N_{3}^{0-} \| g_{N_{1}} Iu_{N_{2}} \|_{X^{1}} \| Iv_{N_{3}} \|_{X^{1}}\\
& \qquad \qquad + N_{2}^{0-} N_{13}^{0-} \| Iu_{N_{2}} \|_{X^{1}} \| g_{N_{1}} Iv_{N_{3}} \|_{X^{1}} \Big ),
\end{aligned}
\end{equation}
where $g := \omega_{a,R}$.
Note that by symmetry we may assume that $N_{2} \geq N_{3}$. We adopt the notation $N_{12}$ for $\vert \xi_{1} + \xi_{2} \vert \sim N_{12}$ when $\vert \xi_{1} \vert \sim N_{1}$ and $\vert \xi_{2} \vert \sim N_{2}$. We adopt similar definitions for $N_{13}$ and $N_{23}$.

\smallskip

\noindent{\textbf{Case (1).}} $N_{2} \ll N$. In this case we see that $m(\xi_{2} + \xi_{3}) - m(\xi_{2}) m(\xi_{3}) = 0$, so the expression to be estimated vanishes.

\smallskip

\noindent{\textbf{Case (2).}} $N_{2} \gtrsim N \gg N_{3}$. We use the mean value theorem to see that
\begin{equation*}
\left \vert m(\xi_{2} + \xi_{3}) - m(\xi_{2}) m(\xi_{3}) \right \vert \lesssim \frac{N_{3}}{N_{2}} m(N_{2}) m(N_{3}).
\end{equation*}
It follows that
\begin{align*}
&\left \| g_{N_{1}} \vert \xi_{2} + \xi_{3} \vert \big ( m(\xi_{2} + \xi_{3}) - m(\xi_{2}) m(\xi_{3}) \big ) \right  \|_{\widetilde{Y}^{1}}\\
\lesssim & \frac{N_{3}}{N_{2}} \left \| \widehat{g}_{N_{1}} \vert \xi_{2} + \xi_{3} \vert \widehat{Iu}_{N_{2}} \widehat{Iv}_{N_{3}} \right \|_{\widetilde{Y}^{1}}\\
\lesssim & \frac{N_{3}}{N_{2}} \Big ( \| g_{N_{1}} Iu_{N_{2}} \partial_{y} Iv_{N_{3}} \|_{Y^{1}} + \| g_{N_{1}} Iv_{N_{3}} \partial_{y} Iu_{N_{2}} \|_{Y^{1}} \Big )\\
\lesssim & \frac{N_{3}}{N_{2}} \Big ( \| g_{N_{1}} Iu_{N_{2}} \|_{X^{3/4+}} \| Iv_{N_{3}} \|_{X^{3/4+}} + \| g_{N_{1}} Iv_{N_{3}} \|_{X^{3/4+}} \| Iu_{N_{2}} \|_{X^{3/4+}} \Big )\\
\lesssim & \frac{N_{3}}{N_{2} \langle N_{12} \rangle^{1/4-} \langle N_{3} \rangle^{1/4-}} \| g_{N_{1}} Iu_{N_{2}} \|_{X^{1}} \| Iv_{N_{3}} \|_{X^{1}}\\
& + \frac{N_{3}}{N_{2} \langle N_{13} \rangle^{1/4-} \langle N_{2} \rangle^{1/4-}} \| g_{N_{1}} Iv_{N_{3}} \|_{X^{1}} \| Iu_{N_{2}} \|_{X^{1}}
\end{align*}
Notice that
\begin{align*}
\frac{N_{3}}{N_{2} \langle N_{12} \rangle^{1/4-} \langle N_{3} \rangle^{1/4-}} \lesssim \frac{N_{3}^{3/4+}}{N_{2} \langle N_{12} \rangle^{1/4-}} \lesssim N^{-1/4+} N_{12}^{0-} N_{3}^{0-},\\
\intertext{and}
\frac{N_{3}}{N_{2} \langle N_{13} \rangle^{1/4-} \langle N_{2} \rangle^{1/4-}} \lesssim \frac{N_{3}^{3/4+}}{N_{2} \langle N_{13} \rangle^{1/4-}} \lesssim N^{-1/4+} N_{2}^{0-} N_{13}^{0-}. 
\end{align*}

\smallskip

\noindent{\textbf{Case (3).}} $N_{2} \geq N_{3} \gtrsim N$. Here we split the expression to be estimated into two terms which are then estimated separately:
\begin{align*}
&\| \widehat{g}_{N_{1}} \vert \xi_{2} + \xi_{3} \vert ( m(\xi_{2} + \xi_{3}) - m(\xi_{2}) m(\xi_{3}) ) \widehat{u}_{N_{2}} \widehat{v}_{N_{3}} \|_{\widetilde{Y}^{1}}\\
\lesssim &\| g_{N_{1}} \vert \xi_{2} + \xi_{3} \vert m(\xi_{2} + \xi_{3}) \widehat{u}_{N_{2}} \widehat{v}_{N_{3}} \|_{\widetilde{Y}^{1}}\\
+ & \| g_{N_{1}} \vert \xi_{2} + \xi_{3} \vert \widehat{Iu}_{N_{2}} \widehat{Iv}_{N_{3}} \|_{\widetilde{Y}^{1}}\\
=: & \ \text{Term I} \ + \ \text{Term II}.
\end{align*}
We estimate Term II as in Case (2) to see that
\begin{align*}
\text{Term II} &\lesssim \frac{1}{\langle N_{12} \rangle^{1/4-} \langle N_{3} \rangle^{1/4-}} \| g_{N_{1}} Iu_{N_{2}} \|_{X^{1}} \| Iv_{N_{3}} \|_{X^{1}}\\ 
&+ \frac{1}{\langle N_{13} \rangle^{1/4-} \langle N_{2} \rangle^{1/4-}} \| g_{N_{1}} Iv_{N_{3}} \|_{X^{1}} \| Iu_{N_{2}} \|_{X^{1}},
\end{align*}
which is sufficient.
Turning to Term I, we have
\begin{align*}
\text{Term I} &\lesssim m(N_{23}) \Big ( \| g_{N_{1}} u_{N_{2}} \partial_{y} v_{N_{3}} \|_{Y^{1}} + \| g_{N_{1}} v_{N_{3}} \partial_{y} u_{N_{2}} \|_{Y^{1}} \Big )\\
&\lesssim m(N_{23}) \Big ( \| g_{N_{1}} u_{N_{2}} \|_{X^{3/4+}} \| v \|_{X^{3/4+}} + \| g_{N_{1}} v_{N_{3}} \|_{X^{3/4+}} \| u_{N_{2}} \|_{X^{3/4+}} \Big )\\
&\lesssim \frac{m(N_{23})}{\langle N_{12} \rangle^{1/4-} m(N_{2}) \langle N_{3} \rangle^{s - 3/4-}} \| g_{N_{1}} Iu_{N_{2}} \|_{X^{1}} \| v_{N_{3}} \|_{X^{s}} \\
&+ \frac{m(N_{23})}{\langle N_{13} \rangle^{1/4-} m(N_{3}) \langle N_{2} \rangle^{s - 3/4-}} \| g_{N_{1}} Iv_{N_{3}} \|_{X^{1}} \| u_{N_{2}} \|_{X^{s}}\\
& \lesssim \frac{m(N_{23})}{\langle N_{12} \rangle^{1/4-} m(N_{2}) \langle N_{3} \rangle^{s - 3/4-}} \| g_{N_{1}} Iu_{N_{2}} \|_{X^{1}} \| Iv_{N_{3}} \|_{X^{1}} \\
&+ \frac{m(N_{23})}{\langle N_{13} \rangle^{1/4-} m(N_{3}) \langle N_{2} \rangle^{s - 3/4-}} \| g_{N_{1}} Iv_{N_{3}} \|_{X^{1}} \| Iu_{N_{2}} \|_{X^{1}},\\
\end{align*}
where in the final inequality we have used that $\| f \|_{X^{s}} \lesssim \| If \|_{X^{1}}$. Observe that since $N_{2} \geq N_{3}$ and $s > 3/4$ we have
\begin{equation*}
\langle N_{2} \rangle^{s - 3/4 -} m(N_{3}) \gtrsim N_{3}^{2s - 7/4 -} N^{1-s} \geq N^{s - 3/4 -},
\end{equation*}
since $s > 7/8$. It follows that
\begin{equation}
\label{TermIestimate}
\frac{m(N_{23})}{\langle N_{13} \rangle^{1/4-} m(N_{3}) \langle N_{2} \rangle^{s - 3/4 -}} \lesssim N^{\frac{3}{4} - s +} N_{13}^{0-} N_{2}^{0-}.
\end{equation}
To estimate the other multiplier expression we first note that if $N_{23} \gtrsim N_{3}$, then $m(N_{23}) \lesssim m(N_{3})$ so that
\begin{equation*}
\frac{m(N_{23})}{\langle N_{12} \rangle^{1/4-} m(N_{2}) \langle N_{3} \rangle^{s - 3/4 -}} \lesssim \frac{1}{\langle N_{12} \rangle^{1/4-} \langle N_{3} \rangle^{s - 3/4 -}},
\end{equation*}
which is acceptable. If $N_{23} \ll N_{3}$, then we must have $N_{2} \sim N_{3}$ (with the relevant factors being supported at frequencies of opposite sign), in which case may estimate $\langle N_{3} \rangle m(N_{2}) \gtrsim N^{s - 3/4 -}$. The estimate is then completed as above in \eqref{TermIestimate}.
\end{proof}

From Proposition \ref{Imethoddecay} we have the following result.

\begin{corollary}
\label{decaycorollary}
Under the hypotheses of Proposition \ref{Imethoddecay} we have
\begin{align*}
&\left \vert \int_{t_{0}}^{t_{0} + \delta} \left \langle e^{ay} Iv, e^{ay} \partial_{y} \Big ( I(uv) - Iu Iv \Big ) \right  \rangle_{H^{1}} dt \right \vert \\
\lesssim & N^{3/4 - s +} \| e^{ay} Iv \|_{X^{1}} \left ( \| e^{ay} Iu \|_{X^{1}} \| Iv \|_{X^{1}} + \| Iu \|_{X^{1}} \| e^{ay} Iv \|_{X^{1}} \right ).
\end{align*}
\end{corollary}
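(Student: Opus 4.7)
My plan is to recognize this corollary as essentially a duality statement combined with the bound already proved in Proposition \ref{Imethoddecay}. The pairing on the left is a spacetime integral of an $H^1$ inner product, which can be rewritten as an $L^2_{t,x}$ pairing of the functions together with their $\partial_y$-derivatives. The natural framework is the duality $(X^1)^{\ast} \supseteq Y^1$ (with respect to the $L^2_{t,x}$ pairing), which is already implicit in the local well-posedness machinery of \cite{pigottraynorH1}; this duality gives, for any spacetime functions $F,G$ supported in the time slab $[t_0,t_0+\delta]$,
\begin{equation*}
\left| \int_{t_0}^{t_0+\delta} \langle F(t), G(t) \rangle_{H^1}\, dt \right| \lesssim \| F \|_{X^1} \| G \|_{Y^1}.
\end{equation*}

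The first step is therefore to introduce a smooth time cutoff $\rho$ adapted to $[t_0, t_0+\delta]$ (so that one may apply the $X^1$/$Y^1$ framework globally in time) and apply this $X^1$-$Y^1$ duality with $F = e^{ay} Iv$ and $G = e^{ay} \partial_y(I(uv) - Iu\, Iv)$. This yields
\begin{equation*}
\left| \int_{t_0}^{t_0+\delta} \langle e^{ay} Iv, e^{ay} \partial_y(I(uv) - Iu\, Iv) \rangle_{H^1}\, dt \right| \lesssim \| e^{ay} Iv \|_{X^1}\, \| e^{ay} \partial_y(I(uv) - Iu\, Iv) \|_{Y^1}.
\end{equation*}

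The second step is simply to invoke Proposition \ref{Imethoddecay} to bound the $Y^1$ factor by
\begin{equation*}
N^{3/4 - s +} \bigl( \| e^{ay} Iu \|_{X^1} \| Iv \|_{X^1} + \| Iu \|_{X^1} \| e^{ay} Iv \|_{X^1} \bigr),
\end{equation*}
and then multiply the two bounds together to obtain the conclusion of the corollary.

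I do not anticipate any substantive obstacle here; the only small technical point is that the $H^1$ inner product involves both $L^2$ pairings of the functions and of their $y$-derivatives, but since the $X^1$ norm already encodes one spatial derivative via its $s=1$ weight, these derivative pairings are absorbed in the standard $X^1$/$Y^1$ duality on $L^2_{t,x}$. Apart from ensuring that the time cutoff does not inflate any norms (which follows from the standard fact that multiplication by $\rho$ is bounded on $X^{s,b,1}$), the proof reduces to Proposition \ref{Imethoddecay} and duality.
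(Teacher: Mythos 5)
Your proposal matches the paper's own proof: the authors likewise bound the spacetime $H^1$ pairing by $\| e^{ay} Iv \|_{X^{1}} \| e^{ay} \partial_{y} ( I(uv) - Iu\, Iv ) \|_{Y^{1}}$ via Cauchy--Schwarz duality (using the embedding $X^{1,1/2+} \hookrightarrow X^{1,1/2,1}$) and then invoke Proposition \ref{Imethoddecay}. No substantive difference.
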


\begin{proof}
We apply Cauchy-Schwartz together with the embedding $X^{1,1/2+} \hookrightarrow X^{1,1/2,1}$ to see that
\begin{align*}
&\left \vert \int_{t_{0}}^{t_{0} + \delta} \langle e^{ay} Iv, e^{ay} \partial_{y} \Big ( I(uv) - Iu Iv \Big ) \rangle_{H^{1}} dt \right \vert \\
\lesssim & \| e^{ay} Iv \|_{X^{1}} \| e^{ay} \partial_{y} ( I(uv) - Iu Iv ) \|_{Y^{1}}\\
\lesssim & N^{3/4 - s +} \| e^{ay} Iv \|_{X^{1}} \Big ( \| e^{ay} Iu \|_{X^{1}} \| Iv \|_{X^{1}} + \| Iu \|_{X^{1}} \| e^{ay} Iv \|_{X^{1}} \Big ).
\end{align*}
\end{proof}

\section{Modified Local Well-Posedness}

This section is devoted to the proof of local well-posedness for the $\widetilde{v}$-equation and the $\widetilde{w}$-equation. We make the change of variables $y \mapsto y + \gamma(t) + \int_{0}^{t} c(s) ds$ and find that the initial value problem for $\widetilde{v} = I_{N}v$ is given by
\begin{equation}
\label{vlwp}
\left \{ \begin{array}{l l}
\partial_{t} \widetilde{v} + \partial_{y}^{3} \widetilde{v} + I_{N} \partial_{y}(v^{2}) +  \partial_{y}(\psi_{c} \widetilde{v}) +  I_{N}\partial_{y}(\psi_{c} v)  + (\dot{\gamma} \partial_{y} + \dot{c} \partial_{c}) I_{N} \psi_{c} = 0,\\
\widetilde{v}(0,y) = \widetilde{v}_{0}(y).
\end{array}
\right .
\end{equation}
The equation for $\widetilde{w} = e^{ay} I_{N}v$ is given by the modulation equation
\begin{equation*}
\partial_{t} \widetilde{w} = A_{a} \widetilde{w} + Q \widetilde{\mathcal{F}},
\end{equation*}
where $A_{a} = e^{ay} \partial_{y} ( - \partial_{y}^{2} + c_{0} - 2 \psi_{c}) e^{-ay}$, $Q$ is the spectral projection, and
\begin{align*}
\widetilde{\mathcal{F}} &= (c - c_{0} + \dot{\gamma})(\partial_{y} - a) \widetilde{w} - e^{ay} I_{N} \partial_{y}(v^{2}) - e^{ay}(\dot{\gamma} \partial_{y} + \dot{c} \partial_{c}) I_{N} \psi_{c}\\
&\quad - e^{ay} \partial_{y} \left ( I_{N}(\psi_{c} v) - \psi_{c} I_{N}v \right ).
\end{align*}
Upon expanding the operator $A_{a}$, we find that the initial value problem for $\widetilde{w}$ is
\begin{equation}
\label{wlwp}
\left \{ \begin{array}{l l}
\partial_{t} \widetilde{w} + \partial_{y}^{3} \widetilde{w} - 3a \partial_{y}^{2} \widetilde{w} + (3a^{2} - c_{0}) \partial_{y} \widetilde{w} + a(c_{0} - a^{2}) \widetilde{w} & \\
\quad + 2(\partial_{y} - a) (\psi_{c} \widetilde{w}) - Q \widetilde{\mathcal{F}} = 0,\\
\widetilde{w}(0,y) = \widetilde{w}_{0}(y).
\end{array}
\right .
\end{equation}

Before we proceed with our local well-posedness argument, we define the time-localized space $X^{s}_{\delta}$ to be the space with the norm
\begin{equation*}
\| u \|_{X^{s}_{\delta}} := \inf \{ \| w \|_{X^{s}} \ \vert \ w \equiv u \ \text{on}\ [0,\delta] \}.
\end{equation*}

The main goal of this section is to prove the following modified local well-posedness result:

\begin{proposition}
\label{modifiedlwp}
Let $0 < a < \sqrt{c_{0}/3}$, $s > 7/8$, and $N >1$. There is an $r > 0$ such that the following statement holds: If $v_{0} \in H^{s}(\R)$ satisfies $\| \widetilde{v}_{0} \|_{H^{1}} < r$ and $\| \widetilde{w} \|_{H^{1}} < r$ where $\widetilde{v}_{0} = I_{N} v_{0}$ and $\widetilde{w}_{0} = e^{ay} I_{N} v$, then there is a $\delta > 0$ so that the initial value problems \eqref{vlwp} and \eqref{wlwp} admit solutions $\widetilde{v}(t,y), \widetilde{w}(t,y)$, respectively, on $[0,\delta]$. Moreover these solutions satisfy
\begin{equation*}
\| \widetilde{v} \|_{X^{1}_{\delta}} \lesssim \| \widetilde{v}_{0} \|_{H^{1}}, \qquad \text{and} \qquad \| \widetilde{w} \|_{X^{1}} \lesssim \| \widetilde{w}_{0} \|_{H^{1}}.
\end{equation*}
\end{proposition}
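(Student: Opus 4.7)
The plan is a coupled contraction-mapping argument for the pair $(\widetilde{v}, \widetilde{w})$ in a product of small balls in $X^{1}_{\delta} \times X^{1}_{\delta}$. I would first rewrite \eqref{vlwp} and \eqref{wlwp} in Duhamel form with appropriate time cutoffs,
\[
\widetilde{v}(t) = \rho(t/\delta) W_{1}(t) \widetilde{v}_{0} - \rho(t/\delta) \int_{0}^{t} W_{1}(t-s) \mathcal{N}_{v}(s)\, ds,
\]
and a parallel expression using $W_{2}(t)$ together with $Q\widetilde{\mathcal{F}}$ for $\widetilde{w}$, where $\mathcal{N}_{v}$ collects the nonlinear and modulation terms from \eqref{vlwp}. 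The linear estimates \eqref{W1homogeneouslinear}--\eqref{W2nonhomogeneouslinear} then reduce matters to bounding $\mathcal{N}_{v}$ and $Q\widetilde{\mathcal{F}}$ in $Y^{1}$.

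Next I would handle each nonlinear piece in turn. Writing $I_{N}\partial_{y}(v^{2}) = \partial_{y}(\widetilde{v}^{2}) + \partial_{y}(I_{N}(v^{2}) - (I_{N}v)^{2})$, the first summand in both the unweighted and weighted equations is controlled by the bilinear estimate \eqref{bilinearestimate} (and, on the weighted side, by reducing via $v = e^{-ay}w$ and applying \eqref{bilinearestimate} to $w$), while the commutator remainder in the weighted equation is exactly the expression estimated in Proposition \ref{Imethoddecay}. The soliton-perturbation terms $\partial_{y}(\psi_{c} \widetilde{v})$ and $\partial_{y}(I_{N}(\psi_{c}v) - \psi_{c} I_{N}v)$ fit the hypotheses of Proposition \ref{alphabilinear}, with one input slot occupied by the smooth, exponentially decaying profile $\psi_{c}$, which has finite $X^{\alpha}$-norm on any fixed time window. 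For the modulation contribution $(\dot{\gamma}\partial_{y} + \dot{c}\partial_{c})I_{N}\psi_{c}$, I would first solve \eqref{modulation}: invertibility of $\mathcal{A}$ for $\widetilde{w}$ small gives $|\dot{c}| + |\dot{\gamma}| \lesssim \|\widetilde{w}\|_{L^{\infty}_{t} L^{2}}$ plus quadratic corrections, and $I_{N}\psi_{c}$ is again a fixed, rapidly decaying Schwartz profile.

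Assembling the pieces produces a schematic inequality of the form
\[
\|\widetilde{v}\|_{X^{1}_{\delta}} + \|\widetilde{w}\|_{X^{1}_{\delta}} \leq C\bigl(\|\widetilde{v}_{0}\|_{H^{1}} + \|\widetilde{w}_{0}\|_{H^{1}}\bigr) + C\delta^{\theta}\bigl(\|\widetilde{v}\|_{X^{1}_{\delta}} + \|\widetilde{w}\|_{X^{1}_{\delta}}\bigr)^{2}
\]
for some $\theta > 0$ harvested from the time cutoff $\rho(t/\delta)$. Picking $r$ small first and then $\delta$ small forces the solution map to send a ball of radius comparable to the data norm into itself, and the same bilinear estimates applied to differences give the Lipschitz bound yielding contraction and hence a unique fixed point satisfying the claimed estimates.

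The main obstacle, as I see it, is the coupling between the two equations through the modulation parameters: $\dot{c}$ and $\dot{\gamma}$ appear explicitly in both \eqref{vlwp} and \eqref{wlwp}, yet \eqref{modulation} defines them through quantities that involve $\widetilde{w}$, so one cannot solve for $\widetilde{v}$ and $\widetilde{w}$ separately and must run the fixed point on the product space. A secondary technical point is that the spectral projection $Q$ acting on $\widetilde{\mathcal{F}}$ must be shown to be bounded on $Y^{1}$-type norms; this should follow from Proposition \ref{spectral} together with the smoothness and exponential decay of the dual eigenfunctions $\eta_{j}$, but it has to be checked with care since $Q$ is not a Fourier multiplier and its boundedness depends on testing against rapidly decaying profiles rather than on any translation-invariant structure.
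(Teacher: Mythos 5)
Your proposal follows essentially the same route as the paper: Duhamel reformulation with time cutoffs, reduction via the linear estimates \eqref{W1homogeneouslinear}--\eqref{W2nonhomogeneouslinear} to $Y^{1}$ bounds on the nonlinearities, the splitting $I_{N}\partial_{y}(v^{2}) = \partial_{y}((I_{N}v)^{2}) + \partial_{y}(I_{N}(v^{2}) - (I_{N}v)^{2})$ with the commutator handled by Proposition \ref{Imethoddecay}, control of $\dot{c},\dot{\gamma}$ through the modulation system, boundedness of the projection terms (which the paper takes from Lemma 3.5 of the companion work), and a coupled contraction for $(\widetilde{v},\widetilde{w})$ on a product of small balls. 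The argument is correct and matches the paper's proof in all essentials.
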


\begin{proof}
Let $\rho:\R \to \R$ be a smooth cutoff function, as in \eqref{timecutoff}, and let $\rho_{\delta}(\cdot) = \rho(\cdot/\delta)$.
We begin by rewriting the equation for $\widetilde{v}(t,y)$, \eqref{vlwp}, using Duhamel's formula:
\begin{align*}
\widetilde{v} &= W_{1}(t) \widetilde{v}_{0} + \int_{0}^{t} W_{1}(t-s) \Big ( I_{N} \partial_{y}(v^{2}) + 2 \partial_{y}(\psi_{c} \widetilde{v}) + \partial_{y}(I_{N}(\psi_{c} v) - \psi_{c} I_{N}v ) \Big ) \\
&\quad  + \int_{0}^{t} W_{1}(t-s) (\dot{\gamma} \partial_{y} + \dot{c} \partial_{c}) I_{N}\psi_{c} ds.
\end{align*}
We will show that the map $\Phi$ given by
\begin{align*}
\Phi \widetilde{v} &:= \rho_{\delta}(t) W_{1}(t) \widetilde{v}_{0} + \rho_{\delta}(t) \int_{0}^{t} W_{1}(t-s) \Big ( I_{N} \partial_{y}(v^{2}) + 2 \partial_{y}(\psi_{c} \widetilde{v})\Big ) ds  \\
&\quad  +  \rho_{\delta}(t) \int_{0}^{t} W_{1}(t-s) \Big ( \partial_{y}(I_{N}(\psi_{c} v) - \psi_{c} I_{N}v )+ (\dot{\gamma} \partial_{y} + \dot{c} \partial_{c}) I_{N}\psi_{c} \Big ) ds
\end{align*}
is a contraction on a small ball in $X^{1}_{\delta}$. We estimate $\Phi \widetilde{v}$ in $X^{1}_{\delta}$ using \eqref{W1homogeneouslinear} and \eqref{W1nonhomogeneouslinear}:
\begin{align*}
\| \Phi \widetilde{v} \|_{X^{1}_{\delta}} &\lesssim \| \widetilde{v}_{0} \|_{H^{1}} + \| I_{N} \partial_{y}(v^{2}) \|_{Y^{1}_{\delta}} + \| \partial_{y} (\psi_{c} \widetilde{v}) \|_{Y^{1}_{\delta}}\\
&+ \| \partial_{y} ( I_{N}(\psi_{c} v) - \psi_{c} I_{N}v) \|_{Y^{1}_{\delta}} + \| (\dot{\gamma} \partial_{y} + \dot{c} \partial_{c}) I_{N} \psi_{c} \|_{Y^{1}_{\delta}}\\
&=: \| \widetilde{v}_{0} \|_{H^{1}} + \text{Term I} + \text{Term II} + \text{Term III} + \text{Term IV} .
\end{align*}
To estimate Term I we first note that
\begin{equation*}
\| I_{1} \partial_{y}(v^{2}) \|_{Y^{1}_{\delta}} \sim \| \partial_{y}(v^{2}) \|_{Y^{s}_{\delta}} \lesssim \| v \|_{X^{s}_{\delta}}^{2} \sim \| I_{1} v \|_{X^{1}_{\delta}}^{2}.
\end{equation*}
In light of Lemma 12.1 from \cite{MR2054622} we may conclude that
\begin{equation*}
\| I_{N} \partial_{y} (v^{2}) \|_{Y^{1}_{\delta}} \lesssim \| I_{N}v \|_{X^{1}_{\delta}}^{2} = \| \widetilde{v} \|_{X^{1}}^{2}.
\end{equation*}
To estimate Term II we use the bilinear estimate \eqref{bilinearestimate} to see that
\begin{equation*}
\text{Term II} \lesssim \| \psi_{c} \|_{X^{1}_{\delta}} \| \widetilde{v} \|_{X^{1}_{\delta}}.
\end{equation*}
Recall that for $\delta, \epsilon > 0$ sufficiently small we have
\begin{equation*}
\| \psi_{c} \|_{X^{1}_{\delta}} \lesssim \delta^{\epsilon}.
\end{equation*}
Thus
\begin{equation*}
\text{Term II} \lesssim \delta^{\epsilon} \| \widetilde{v} \|_{X^{1}_{\delta}}.
\end{equation*}
Turning to Term III we argue as for Terms I and II to find that
\begin{equation*}
\text{Term III} \lesssim \| \partial_{y} I_{N}(\psi_{c} v) \|_{Y^{1}_{\delta}} + \| \partial_{y}(\psi_{c} I_{N}v) \|_{Y^{1}_{\delta}} \lesssim \delta^{\epsilon} \| \widetilde{v} \|_{X^{1}_{\delta}}.
\end{equation*}
Finally, for Term IV we recall that from the modulation equations we have 
\begin{equation*}
\| \dot{\gamma} \|_{L^{\infty}_{t}}, \| \dot{c} \|_{L^{\infty}_{t}} \lesssim \| \widetilde{w} \|_{X^{1}_{\delta}}
\end{equation*}
 so that
\begin{equation*}
\text{Term IV} \lesssim \delta^{\epsilon} \| \widetilde{w} \|_{X^{1}_{\delta}}.
\end{equation*}
Taken all together we have
\begin{equation}
\label{Phivestimate}
\| \Phi \widetilde{v} \|_{X^{1}_{\delta}} \lesssim \| \widetilde{v}_{0} \|_{H^{1}} + \| \widetilde{v} \|_{X^{1}_{\delta}}^{2} + \delta^{\epsilon} \| \widetilde{v} \|_{X^{1}_{\delta}} + \delta^{\epsilon} \| \widetilde{w} \|_{X^{1}_{\delta}}.
\end{equation}

For the $\widetilde{w}$ equation we expand the spectral projection $Qf = f - \sum_{j=1}^{2} \langle f, \eta_{j} \rangle \zeta_{j}$ and make the change of variables $y \mapsto y - ( (3a^{2} - c_{0})t + \gamma(t) - \int_{0}^{t} c(s) ds)$, so that the equation for $\widetilde{w}$ reads
\begin{align*}
& \partial_{t} \widetilde{w} + \partial_{y}^{3} \widetilde{w} - 3a \partial_{y}^{2} \widetilde{w} + a(c_{0} - a^{2} - c + c_{0}) \widetilde{w} - a \dot{\gamma} \widetilde{w} - e^{ay} I_{N} \partial_{y}(v^{2})\\
&\ \  - e^{ay} (\dot{\gamma} \partial_{y} + \dot{c} \partial_{c}) I_{N} \psi_{c} - e^{ay} \partial_{y} ( I_{N}(\psi_{c} v) - \psi_{c} I_{N} v)\\
&\ \  + \langle \widetilde{\mathcal{F}}, \eta_{1} \rangle \zeta_{1} + \langle \widetilde{\mathcal{F}}, \eta_{2} \rangle \zeta_{2} = 0.
\end{align*}
Rewriting this equation using Duhamel's formula leads us to define the following operator
\begin{align*}
&\Psi \widetilde{w} = \rho_{\delta}(t)  W_{2}(t) \widetilde{w}_{0} + \rho_{\delta}(t) \int_{0}^{t} W_{2}(t-s) \Big ( 2(\partial_{y} - a) (\rho_{\delta}^{2} \psi_{c} \widetilde{w}) + a \rho_{\delta} \dot{\gamma} \widetilde{w} \Big ) ds\\
+ &\rho_{\delta}(t) \int_{0}^{t} W_{2}(t-s) \Big ( a(c - c_{0}) \rho_{\delta} \widetilde{w} - e^{ay} I_{N} \partial_{y}(\rho_{\delta}^{2} v^{2}) \Big ) ds\\
+ &\rho_{\delta}(t) \int_{0}^{t} W_{2}(t-s) \Big (- e^{ay} (\dot{\gamma} \partial_{y} + \dot{c} \partial_{c}) \rho_{\delta} I_{N} \psi_{c} + e^{ay} \partial_{y} (I_{N}(\psi_{c} v) - \psi_{c} I_{N}v) \Big ) ds\\
 + & \rho_{\delta}(t) \int_{0}^{t} W_{2}(t-s) \Big ( \rho_{\delta} \langle \widetilde{\mathcal{F}},\eta_{1} \rangle \zeta_{1} + \rho_{\delta} \langle \widetilde{\mathcal{F}},\eta_{2} \rangle \zeta_{2} \Big ) ds,
\end{align*}
which we hope to show is a contraction on a ball in $X^{1}_{\delta}$. We estimate $\Psi \widetilde{w}$ in $X^{1}_{\delta}$ using \eqref{W2homogeneouslinear} and \eqref{W2nonhomogeneouslinear}, which yields
\begin{align*}
\| \Psi \widetilde{w} \|_{X^{1}_{\delta}} &\lesssim \| \widetilde{w}_{0} \|_{H^{1}} + \| (\partial_{y} - a) \rho_{\delta}^{2} \psi_{c} \widetilde{w} \|_{Y^{1}_{\delta}} + \| \rho_{\delta} \dot{\gamma} \widetilde{w} \|_{Y^{1}_{\delta}} + \| (c - c_{0}) \rho_{\delta} \widetilde{w} \|_{Y^{1}_{\delta}}\\
&+ \| e^{ay} I_{N} \partial_{y}(\rho_{\delta}^{2} v^{2}) \|_{Y^{1}_{\delta}} + \| e^{ay} (\dot{\gamma} \partial_{y} + \dot{c} \partial_{c}) \rho_{\delta} I_{N} \psi_{c} \|_{Y^{1}_{\delta}}\\
& + \| e^{ay} \partial_{y} (I_{N}(\psi_{c} v) - \psi_{c} I_{N}v ) \|_{Y^{1}_{\delta}} + \| \rho_{\delta} \langle \widetilde{\mathcal{F}}, \eta_{1} \rangle \zeta_{1} \|_{Y^{1}_{\delta}} + \| \rho_{\delta} \langle \widetilde{\mathcal{F}}, \eta_{2} \rangle \zeta_{2} \|_{Y^{1}_{\delta}}\\
&= \| \widetilde{w}_{0} \|_{H^{1}} + \text{Term I} + \text{Term II} + \text{Term III} + \text{Term IV}\\
&\ \ \   + \text{Term V} + \text{Term VI} + \text{Term VII} + \text{Term VIII}.
\end{align*}
To estimate Term I we use $e^{ay} \partial_{y} e^{-ay} = \partial_{y} - a$, $\widetilde{v} = e^{-ay} \widetilde{w}$, and the bilinear estimate \eqref{bilinearestimate} to see that
\begin{align*}
\text{Term I} &= \| e^{ay} \partial_{y} e^{-ay} \psi_{c} \widetilde{w} \|_{Y^{1}_{\delta}} = \| e^{ay} \partial_{y} \psi_{c} \widetilde{v} \|_{Y^{1}_{\delta}}\\
&\leq \| e^{ay} \widetilde{v} \partial_{y} \psi_{c} \|_{Y^{1}_{\delta}} + \| e^{ay} \psi_{c} \partial_{y} \widetilde{v} \|_{Y^{1}_{\delta}}\\
&\lesssim \| \widetilde{w} \|_{X^{1}_{\delta}} \| \psi_{c} \|_{X^{1}_{\delta}} + \| e^{ay} \psi_{c} \|_{X^{1}_{\delta}} \| \widetilde{v} \|_{X^{1}_{\delta}}\\
&\lesssim \delta^{\epsilon} \| \widetilde{w} \|_{X^{1}_{\delta}} + \delta^{\epsilon} \| \widetilde{v} \|_{X^{1}_{\delta}}.
\end{align*}
In estimating Term II we use that $\| \dot{\gamma} \|_{L^{\infty}_{t}} \lesssim \| \widetilde{w} \|_{X^{1}_{\delta}}$, which gives
\begin{equation*}
\text{Term II} \lesssim \| \widetilde{w} \|_{X^{1}_{\delta}}^{2}.
\end{equation*}
In order to estimate Term III we note that
\begin{equation*}
\vert c(t) - c_{0} \vert \leq \int_{0}^{t} \vert \dot{c}(s) \vert ds \lesssim \int_{0}^{t} \| \widetilde{w}(s) \|_{H^{1}_{x}} ds \lesssim \| \widetilde{w} \|_{L^{1}_{t} H^{1}_{x}}.
\end{equation*}
Since we are restricted to the interval $[0,\delta]$, H\"{o}lder's inequality gives
\begin{equation*}
\vert c(t) - c_{0} \vert \leq \delta^{1/2} \| \widetilde{w} \|_{L^{2}_{t} H^{1}_{x}} \lesssim \delta^{1/2} \| \widetilde{w} \|_{X^{1}_{\delta}}.
\end{equation*}
It follows that
\begin{equation*}
\text{Term III} \lesssim \| c - c_{0} \|_{L^{\infty}_{t}} \| \widetilde{w} \|_{X^{1}_{\delta}} \lesssim \| \widetilde{w} \|_{X^{1}_{\delta}}^{2}.
\end{equation*}
To estimate Term IV we use \eqref{decayestimate} and \eqref{bilinearestimate} to see that
\begin{align*}
\text{Term IV} &\leq \| e^{ay} \partial_{y} (I_{N} (\rho_{\delta}^{2} v^{2}) - \rho_{\delta}^{2} (I_{N} v)^{2}) \|_{Y^{1}_{\delta}} + \| e^{ay} \partial_{y} (I_{N}v)^{2} \|_{Y^{1}_{\delta}}\\
&\lesssim \| e^{ay} I_{N}v \|_{X^{1}_{\delta}} \| I_{N}v \|_{X^{1}_{\delta}}\\
&= \| \widetilde{w} \|_{X^{1}_{\delta}} \| \widetilde{v} \|_{X^{1}_{\delta}}.
\end{align*}
The estimate for Term V is similar to the one we used for the analogous term in the $\widetilde{v}$ equation (term $(IV)$), yielding
\begin{equation*}
\text{Term V} \lesssim \delta^{\epsilon} \| \widetilde{w} \|_{X^{1}_{\delta}}.
\end{equation*}
Term VI is estimated using \eqref{decayestimate}, \eqref{bilinearestimate}, and the fact that $\| I_{N} \psi_{c} - \psi_{c} \|_{X^{1}_{\delta}} \lesssim N^{-C}$ with $C$ as large as need be:
\begin{align*}
\text{Term VI} &\leq \| e^{ay} \partial_{y} ( I_{N}(\psi_{c} v) - I_{N} \psi_{c} I_{N} v) \|_{Y^{1}_{\delta}} + \| e^{ay} \partial_{y} ( \psi_{c} - I_{N} \psi_{c}) I_{N}v \|_{Y^{1}_{\delta}}\\
&\lesssim N^{-1/8+} \delta^{\epsilon} \| \widetilde{v} \|_{X^{1}_{\delta}} + N^{-1/8+} \delta^{\epsilon} \| \widetilde{w} \|_{X^{1}_{\delta}} + N^{-C} \| \widetilde{v} \|_{X^{1}_{\delta}} + N^{-C} \| \widetilde{w} \|_{X^{1}_{\delta}},
\end{align*}
 leaving us with
\begin{align*}
\text{Term VI} &\lesssim \delta^{\epsilon} \| \widetilde{v} \|_{X^{1}_{\delta}} + \delta^{\epsilon} \| \widetilde{w} \|_{X^{1}_{\delta}}.
\end{align*}
Turning to Terms VII and VIII we recall from Lemma 3.5 in \cite{pigottraynorH1} that
\begin{equation*}
\| \langle f, \eta_{j} \rangle \zeta_{j} \|_{Y^{1}_{\delta}} \lesssim \| f \|_{Y^{1}_{\delta}}, \qquad j = 1,2.
\end{equation*}
It follows that
\begin{equation*}
\text{Term VII, Term VIII} \lesssim \| \widetilde{\mathcal{F}} \|_{Y^{1}_{\delta}} \lesssim \| \widetilde{w} \|_{X^{1}_{\delta}}^{2} + \| \widetilde{v} \|_{X^{1}_{\delta}} \| \widetilde{w} \|_{X^{1}_{\delta}} + \delta^{\epsilon} \| \widetilde{w} \|_{X^{1}_{\delta}} + \delta^{\epsilon} \| \widetilde{v} \|_{X^{1}_{\delta}}.
\end{equation*}
Altogether, then, we have
\begin{align*}
\| \Psi \widetilde{w} \|_{X^{1}_{\delta}} \lesssim \| \widetilde{w}_{0} \|_{H^{1}} + \delta^{\epsilon} \| \widetilde{w} \|_{X^{1}_{\delta}} + \delta^{\epsilon} \| \widetilde{v} \|_{X^{1}_{\delta}} + \| \widetilde{w} \|_{X^{1}_{\delta}}^{2} + \| \widetilde{w} \|_{X^{1}_{\delta}} \| \widetilde{v} \|_{X^{1}_{\delta}}.
\end{align*}

Suppose that $\| \widetilde{v}_{0} \|_{H^{1}}, \| \widetilde{w}_{0} \|_{H^{1}} < r \ll 1$ and let
\begin{equation*}
\mathcal{B} = \left \{ \widetilde{v}, \widetilde{w} \in X^{1}_{\delta} \ \vert \ \| \widetilde{v} \|_{X^{1}_{\delta}} \leq 2cr, \ \| \widetilde{w} \|_{X^{1}_{\delta}} \leq 2cr \right \}.
\end{equation*}
Using the estimates that we have established, it transpires that $\Phi, \Psi:\mathcal{B} \to \mathcal{B}$ are contractions following the arguments from Proposition 4 of \cite{pigottraynorH1}. The desired result follows.
\end{proof}

\section{Iteration}
In this section, we prove the main result of the paper, namely the exponential decay of the weighted perturbation given in Theorem \ref{main}.  
We will prove the result by induction.  Define $\dot{c}_n$ and $\dot{\gamma}_n$ by \eqref{modulation}, and let the variable $y$ be defined accordingly as $y= x-\int_0^tc(s)ds-\gamma(t)$.  Let $T>0$ be given.  Let $\kappa = (\max(1-b,\frac34))^{\frac{1}{2+\frac{1-s}{s-\frac34-}}-}$.  Let $N(n) = \kappa^{(-\frac{1}{\frac34-s+}+)n}$.  Now, let $\epsilon_1$ and $c_2$  be sufficiently small so that, whenever $\|e^{ay}I_{N(n)}w(t_n)\|_{H^1}<2\epsilon_1$ and $ \|I_{N(n)}v(t_n)\|_{H^1}<c_2,$ it follows that  $v(t)$ exists on $[t_0,t_0+\delta]$, and 
\begin{equation}\label{lwpest}
\|w\|_{X^{1,b}_{[t_0,t_0+\delta]}} <C_0\epsilon_1 \qquad \mbox{ and } \qquad \|v\|_{X^{1,b}_{[t_0,t_0+\delta]}} < C_0c_2,
\end{equation}
where $C_0$ is the implicit constant in the conclusion of Proposition \ref{modifiedlwp}.  Additionally, assume that $c_2 < \frac{b}{10}$.  Let $n_0=\frac{T}{\delta}$.  Finally, choose $\epsilon_2$ sufficiently small that $Cr^{\frac{n_0}{2}} \epsilon_2 < c_2$, with $r$ to be expressed later.  

We must recall the known control on $v$.  In \cite{MR3082567} it is proven that, with $H(f) = \int |\p_xf|^2-\frac23f^3$, 
\begin{align*}
\|\tilde{v}_n(n)\|_{H^1}^2 & \sim H(\psi+\tilde{v}_n(n))-\left ( \frac{\|\psi + \tilde{v}_n(n)\|_{L^2}}{\|\psi\|_{L^2}}\right )^\frac{10}{3}H(\psi)\\
& = H(\psi +\tilde{v}_n(n))-H(\psi) + (1-\left ( \frac{\|\psi + \tilde{v}_n(n)\|_{L^2}}{\|\psi\|_{L^2}}\right )^\frac{10}{3})H(\psi).
\end{align*}
Then, since $H(\psi)$ is constant and $(1-\left ( \frac{\|\psi + \tilde{v}_n(n)\|_{L^2}}{\|\psi\|_{L^2}}\right )^\frac{10}{3})$ is very small (${\mathcal{O}}(N^{-100})$, e.g.), it suffices to increment $H(\psi+\tilde{v}_n(n))$.  It is then found in \cite{MR3082567}, as in \cite{MR1995945}, that $H(\psi+\tilde{v}_n(n+1))-H(\psi+\tilde{v}_n(n)) \sim N(n)^{-1+}\|\tilde{v}_n(n)\|_{H^1}^2.$  Therefore, when we increment $\tilde{v}_n$, we obtain that 
\begin{align*}
&\|\tilde{v}_{n+1}(n+1)\|_{H^1}^2 -\|\tilde{v}_n(n)\|_{H^1}^2 \\
= &\|\tilde{v}_{n+1}(n+1)\|_{H^1}^2 -\|\tilde{v}_n(n+1)\|_{H^1}^2+\|\tilde{v}_{n}(n+1)\|_{H^1}^2 -\|\tilde{v}_n(n)\|_{H^1}^2 \\
 \lesssim &\left ( \frac{N(n+1)}{N(n)}\right)^{1-s}-1)\|\tilde{v}_n(n+1)\|_{H^1}^2 + 
\|\tilde{v}_{n}(n+1)\|_{H^1}^2 -\|\tilde{v}_n(n)\|_{H^1}^2 \\
 \lesssim &\left ( \frac{N(n+1)}{N(n)}\right)^{1-s}-1)(\|\tilde{v}_n(n+1)\|_{H^1}^2 -\|\tilde{v}_n(n)\|_{H^1}^2) + \|\tilde{v}_{n}(n+1)\|_{H^1}^2 \\
&\qquad -\|\tilde{v}_n(n)\|_{H^1}^2 +  \left ( \frac{N(n+1)}{N(n)}\right)^{1-s}-1)\|\tilde{v}_n(n)\|_{H^1}^2\\
 =  &\left ( \frac{N(n+1)}{N(n)}\right)^{1-s}(\|\tilde{v}_n(n+1)\|_{H^1}^2-\|\tilde{v}_n(n)\|_{H^1}^2) \\
&\qquad + \left ( \frac{N(n+1)}{N(n)}\right)^{1-s}-1)\|\tilde{v}_n(n)\|_{H^1}^2\\
 \leq &\left ( \frac{N(n+1)}{N(n)}\right)^{1-s}(N(n)^{-1+}\|\tilde{v}_n(n)\|_{H^1}^2 +  \left ( \frac{N(n+1)}{N(n)}\right)^{1-s}-1)\|\tilde{v}_n(n)\|_{H^1}^2\\
 = &(\kappa^{(-\frac{1-s}{\alpha + 1-s}+\eta_1)}(N(n)^{-1+}+1)-1)\|\tilde{v}_n(n)\|_{H^1}.
\end{align*}
Therefore, for $n$ large, $$\|\tilde{v}_{n+1}(n+1) \lesssim \kappa^{\frac{1-s}{\frac34-s+}+}(N(n)^{-1+}+1)\|\tilde{v}_n(n)\|_{H^1} \leq r\|\tilde{v}_n(n)\|_{H^1}^2,$$ where $r=1.01\kappa^{\frac{1-s}{\frac34-s+}+}$ is slightly larger than $1$.  Hence it follows that 
\begin{equation}
\|\tilde{v}_n(n)\|_{H^1}^2 \leq Cr^n \epsilon_2^2.
\label{tildevgrowth}
\end{equation}
Hence it follows that $\|\tilde{v}_n(t)\|_{H^1}<c_2$ on $J_n$ for $0 \leq n \leq n_0$.  

With all these preliminaries complete, we can state the induction lemma:

\begin{lemma} Define $\tilde{w}_n(t,y)=e^{ay}I_{N(n)}v(t,y)$ and $\tilde{v}_n(t,y)=I_{N(n)}v(t,y)$ on the time interval $J_n := [t_n, t_{n+1})$,where $t_n=n\delta$.  Suppose $\|\tilde{w}(0)\|_{H^1}<\epsilon_1$, $\|\tilde{v}(0)\|_{H^1}<\epsilon_2$, and $|c(0)-c_0| < \epsilon_1$.  Then, for all $n \in \N$, the following hold:
\begin{enumerate}
\item Define $c(t)$ inductively starting at $c(0)$ by $c(t)=c(t_n)+\int_{t_n}^t\dot{c}_n(t)dt$ for $t \in [t_n,t_{n+1})$, and similarly for $\gamma(t)$.  Then $\dot{c}_n$ and $\dot{\gamma}_n$ are continuous on $J_n$ for all $n$, and $c, \gamma$ are continuous functions of $t$. 
\item $|\dot{c}_n(t_n)|<C\epsilon_1\kappa^n$,
\item $|\dot{\gamma}_n(t_n)| <C \epsilon_1\kappa^n$,
\item $|c(t_n)-c_0| < C\frac{1-\kappa^n}{1-\kappa} \epsilon_1$, and
\item $\|\tilde{w}_n(t_n)\|_{H^1} < \epsilon_1 \kappa^n$,
\end{enumerate}
where $C=2\max\{(2 + \|u\|_{L^\infty}+\|p_yu\|_{L^\infty})(\|\eta_1\|_{L^2}+ \|\eta_2\|_{L^2}),C_0^\frac32,1\}.$
\end{lemma}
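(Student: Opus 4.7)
The proof will proceed by induction on $n$. For the base case $n=0$, statement (5) is immediate from the hypothesis $\|\tilde{w}(0)\|_{H^1} < \epsilon_1$. For (2) and (3) at $t=0$, invert the modulation system \eqref{modulation}: the matrix $\mathcal{A}$ is a small perturbation of the identity when the data are small, so $|\dot{c}_0(0)| + |\dot{\gamma}_0(0)| \lesssim |\langle \tilde{\mathcal{G}}(0), \eta_1 \rangle| + |\langle \tilde{\mathcal{G}}(0), \eta_2 \rangle|$, and Cauchy--Schwarz, the exponential decay of $\eta_1, \eta_2$ and the hypothesized $H^1$-smallness give the desired bound $C\epsilon_1$. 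Statement (4) at $n=0$ is trivial, and (1) is a consequence of the continuity of the modulation parameters produced by local well-posedness.

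For the inductive step, assume (1)--(5) hold at index $n$. First apply Proposition \ref{modifiedlwp} at time $t_n$: the inductive bound (5) gives $\|\tilde{w}_n(t_n)\|_{H^1} < \epsilon_1\kappa^n < 2\epsilon_1$, and \eqref{tildevgrowth} gives $\|\tilde{v}_n(t_n)\|_{H^1} \leq Cr^{n/2}\epsilon_2 < c_2$ thanks to the choice of $\epsilon_2$, so the local theory provides $\tilde{v}, \tilde{w}$ on $J_n$ with the $X^{1}_{J_n}$ bounds \eqref{lwpest}. Inserting these bounds into \eqref{modulation} and bounding the inner products against $\eta_1,\eta_2$ via Cauchy--Schwarz yields $|\dot{c}_n(t)| + |\dot{\gamma}_n(t)| \leq C\epsilon_1\kappa^n$ on all of $J_n$, which gives (2) and (3) at $t_{n+1}$ (with the new value being $|\dot{c}_{n+1}(t_{n+1})|$ controlled by the same argument run at the next step once (5) is known at $n+1$). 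Summing (2) geometrically yields (4): $|c(t_{n+1})-c_0| \leq \sum_{k=0}^{n} \delta \cdot C\epsilon_1\kappa^k \leq C\frac{1-\kappa^{n+1}}{1-\kappa}\epsilon_1$, possibly absorbing $\delta$ into $C$.

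The heart of the argument is bound (5) at $n+1$. Writing the $\tilde{w}_n$ equation as $\partial_t \tilde{w}_n = A_a \tilde{w}_n + Q\tilde{\mathcal{F}}$ and computing $\frac{d}{dt}\|\tilde{w}_n\|_{H^1}^2$, the $\langle A_a \tilde{w}_n, \tilde{w}_n\rangle_{H^1}$ term decays at rate $\leq -b$ by Proposition \ref{spectral} since $Q\tilde{w}_n = \tilde{w}_n$, producing a factor $e^{-b\delta} \leq 1-b/2$ after integration on $J_n$. The forcing $Q\tilde{\mathcal{F}}$ splits into three pieces: the modulation error $(c - c_0 - \dot{\gamma})(\partial_y - a)\tilde{w}$, estimated via \eqref{bilinearestimate} and (2)--(4) at index $n$, contributes $\delta^{\epsilon}$ times $\|\tilde{w}\|_{X^1_{J_n}}^2$; the direct quadratic $e^{ay}I_{N(n)}\partial_y(v^2)$ expands as $e^{ay}\partial_y(I_{N(n)}v)^2 + e^{ay}\partial_y(I_{N(n)}(v^2) - (I_{N(n)}v)^2)$ controlled respectively by \eqref{bilinearestimate} and by Corollary \ref{decaycorollary}, giving a factor $N(n)^{3/4-s+}\|\tilde{w}\|_{X^1_{J_n}}\|\tilde{v}\|_{X^1_{J_n}}$; and the mixed term $e^{ay}\partial_y(I_{N(n)}(\psi_c v) - \psi_c I_{N(n)}v)$ is handled the same way, with the extra smoothness of $\psi_c$ making the cost negligible. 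Collecting, $\|\tilde{w}_n(t_{n+1})\|_{H^1}^2 \leq (1-b/2 + o(1))\|\tilde{w}_n(t_n)\|_{H^1}^2$, where the $o(1)$ gathers the nonlinear pieces and vanishes once $\epsilon_1, \epsilon_2$ are small and $\delta$ chosen in accordance with $\kappa > \sqrt{1-b/2}$. Finally, passing from $\tilde{w}_n(t_{n+1})$ to $\tilde{w}_{n+1}(t_{n+1})$ only changes the frequency cutoff from $N(n)$ to $N(n+1) < N(n)$, which is a bounded operation on $H^1$ with operator norm $1 + o(1)$ (and in fact decreases the norm on the high-frequency piece), so the inequality $\|\tilde{w}_{n+1}(t_{n+1})\|_{H^1} < \epsilon_1\kappa^{n+1}$ follows.

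The main obstacle is calibrating $\kappa$. The linear decay contributes a factor $\sqrt{1-b/2}$ per step, but the nonlinear corrections scale with $\|\tilde{v}\|_{X^1} \sim Cr^{n/2}\epsilon_2$, which grows polynomially in $n$, while the $I$-method commutator yields the decaying factor $N(n)^{3/4-s+} = \kappa^{(1+\text{small})n}$. Forcing $\kappa^2$ to dominate the sum of the linear decay $1-b/2$ and the aggregated nonlinear error requires the precise choice $\kappa = (\max(1-b,3/4))^{1/(2+(1-s)/(s-3/4-))-}$, together with the smallness assumptions on $\epsilon_1$ and $\epsilon_2$ and the choice of $n_0 = T/\delta$, which together ensure the finite-time bound in Theorem \ref{main}.
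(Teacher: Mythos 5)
Your overall architecture matches the paper's: induction, inversion of the modulation matrix $\mathcal{A}$ plus an $L^2$ bound on $\tilde{\mathcal{G}}$ for (2)--(3), geometric summation for (4), and an $H^1$ energy increment combining the spectral decay of $A_a$ with the bilinear and commutator estimates for (5). But there is a genuine error in the renormalization step. You assert that passing from $\tilde{w}_n(t_{n+1})$ to $\tilde{w}_{n+1}(t_{n+1})$ changes the cutoff from $N(n)$ to $N(n+1)<N(n)$ and is ``a bounded operation on $H^1$ with operator norm $1+o(1)$'' that if anything decreases the norm. In fact $N(n)$ is \emph{increasing} (with $\kappa<1$ and a negative exponent multiplying $n$, $N(n)=\kappa^{-cn}\to\infty$), and since $m_N(\xi)=(N/|\xi|)^{1-s}$ on high frequencies is increasing in $N$, the renormalization is a \emph{loss}: the paper's estimate is $K_{n+1}(n+1)\le (N(n+1)/N(n))^{1-s}K_n(n+1)\le \kappa^{\frac{1-s}{3/4-s+}+}K_n(n+1)$ with $\kappa^{\frac{1-s}{3/4-s+}}>1$. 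This loss is precisely why $\kappa$ carries the exponent $1/(2+\frac{1-s}{s-3/4-})$ rather than simply $1/2$: the per-step decay $\max\{1-b,\tfrac34\}=\kappa^{2+\frac{1-s}{s-3/4-}-}$ must pay for both the target contraction $\kappa^2$ and the renormalization loss. Treating this step as free breaks the closing of the induction.

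Two secondary gaps. First, you go straight from the local well-posedness bounds to the energy increment, omitting the continuity (bootstrap) argument the paper uses to establish the a priori pointwise control $L(t)=8C\|\tilde{w}\|_{H^1}+|\dot c|+|\dot\gamma|+|c-c_0|<\eta$ on all of $J_n$; this control is needed \emph{before} the increment computation, because the modulation ODEs are coupled to the PDE and the forcing estimates (terms (I)--(IV)) use the smallness of $|c-c_0|$, $|\dot c|$, $|\dot\gamma|$ throughout the interval. Second, lumping the nonlinear contributions into a single ``$o(1)$'' multiple of $\|\tilde{w}_n(t_n)\|_{H^1}^2$ hides the fact that several forcing terms (e.g.\ the one of size $C c_0\epsilon_1\kappa^n\sqrt{K_n(n)}$) are not proportional to $K_n(n)$; the paper must therefore split into the cases $K_n(n)\sim(\epsilon_1\kappa^n)^2$ and $K_n(n)\ll(\epsilon_1\kappa^n)^2$ to close the step, and your argument as written does not.
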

\begin{proof}
Note that, for $n=0$, $t=0$ and $N(0)=1$, so (4)-(5) are verified by hypothesis.  Also note that the smoothness of $\dot{c}_n$ and $\dot{\gamma}_n$ on each $J_n$ is a standard application of the implicit function theorem.  Then $c$ and $\gamma$ are continuous by construction, so (1) holds for all $n$.  Finally, we need to verify (2)-(3) at $n=0$ in order to begin the induction.  
Note that $$\begin{bmatrix} \dot{\gamma} \\ \dot{c} \end{bmatrix} = \mathcal{A}\left (\begin{bmatrix} \langle {\tilde{\mathcal{G}}}, \eta_1 \rangle_{L^2}\\ \langle {\tilde{\mathcal{G}}}, \eta_2 \rangle_{L^2}\end{bmatrix}\right ),$$ where $$\mathcal{A} = \left ( \begin{bmatrix} 1 + \langle e^{ay}(\p_y\psi_c-\p_y\psi_{c_0}),\eta_1 \rangle -\langle \tilde{w}, \p_y\eta_1 \rangle & \langle e^{ay}(\p_c\psi_c-\p_c\psi_{c_0}),\eta_1 \rangle \\ \langle  e^{ay}(\p_y\psi_c-\p_y\psi_{c_0}),\eta_2 \rangle-\langle \tilde{w}, \p_y\eta_2 \rangle & 1 + \langle e^{ay}(\p_c\psi_c-\p_c\psi_{c_0}),\eta_2 \rangle \end{bmatrix} \right )^{-1}.$$
At any time when $|c-c_0|$ and $\|\tilde{w}_n\|_{H^1}$ are sufficiently small, it follows that $\|\mathcal{A}\|\leq 2$, so that
$$\left |\begin{bmatrix} \dot{\gamma} \\ \dot{c} \end{bmatrix}\right | \leq 2 \left |\begin{bmatrix} \langle {\tilde{\mathcal{G}}}, \eta_1 \rangle_{L^2}\\ \langle {\tilde{\mathcal{G}}}, \eta_2 \rangle_{L^2}\end{bmatrix}\right |\leq 2(\max_{i=1,2}\|\eta_i\|_{H^1})\|{\tilde{\mathcal{G}}}\|_{L^2}.$$  Finally, by Lemma \ref{expIproductrule} 
\begin{align*}
\|{\tilde{\mathcal{G}}}\|_{L^2} &= \|(c-c_0)(\p_y-a)\tilde{w} -e^{ay}I(v^2)_y-e^{ay}\p_y[I(uv)-uIv]\|_{L^2}\\
& \leq |c-c_0|\|\tilde{w}\|_{H^1} + \|e^{ay}I(v^2)_y\|_{L^2} + \|e^{ay}\p_y[I(uv)-uIv]\|_{L^2}\\
& \leq |c-c_0|\|\tilde{w}\|_{H^1} + \|Iv\|_{H^1}\|e^{ay}I\p_yv\|_{L^2}+ 2\|u\|_{L^\infty}\|e^{ay}I\p_yv\|_{L^2}\\
&\qquad + \|\p_yu\|_{L^\infty}\|e^{ay}p_yIv\|_{L^2}\\
& \leq (|c-c_0|+\|Iv\|_{H^1}+2\|u\|_{L^\infty}+\|\p_yu\|_{L^\infty})\|\tilde{w}\|_{H^1}\\
& \leq (2 + 2\|u\|_{L^\infty}+\|\p_yu\|_{L^\infty})\|\tilde{w}\|_{H^1}\\
\end{align*}
so long as $|c-c_0|$ and $\|Iv\|_{H^1}$ are at most unit size.   Therefore (2)-(3) are satisfied at $t=0$ because of our assumptions on the initial data, 
given our choice of $C$ above.

It remains to make the inductive step.  Assume that, at step $n$, (1)-(5) are valid.  In order to step forward in time, we must first gain some a priori control of the various functions on the interval $J_n$.  Without loss of generality, assume $\delta \leq 1$.  Select $\eta$  so that $24C\epsilon_1 < \eta^2$ and $\eta + c_2 < \frac{1}{20}$ (and assume $\epsilon_2$ is sufficiently small to allow this).  Define $L(t)= 8 C \|\tilde{w}\|_{H^1}+|\dot{c}|+|\dot{\gamma}| + |c-c_0|.$  Note that at $t=n$, $L(n)<11C\epsilon_1 < \frac{\eta}{2}$.  Hence, by continuity, there is a $\delta_0>0$ so that $L(t) < \eta$ on $[t_n,t_n+\delta_0)$.  Let $\delta_1$ be the largest such $\delta_0$ which is at most $\delta$.  We want to show that $\delta_1 =\delta$.  Suppose not; then $\delta_1 < \delta$.  Then $L(t_n+\delta_1)=\eta$ by continuity.  Define $J=[t_n,t_n+\delta_1]$.  On $J$, as above, we have that $\dot{c} + \dot{\gamma} < C\|\tilde{w}\|_{H^1}<\frac{\eta}{6}$.  Moreover, $|c-c_0(t)| \leq |c(n)-c_0| + \delta_1\sup_J|\dot{c}| \leq 2C\epsilon_1+ \frac{\eta}{4} \leq \frac{\eta}{12}+\frac{\eta}{6}=\frac{\eta}{4}.$  Finally, we must estimate $\|\tilde{w}(t_n+\delta_1)\|_{H^1}$.  

We have:
\begin{align*}
\|\tilde{w}(t_n+\delta_1)\|_{H^1}^2& = \|\tilde{w}(t_n)\|_{H^1}^2 + \int_J \frac{d}{dt} \|\tilde{w}\|_{H^1}^2dt \\
& = \|\tilde{w}(t_n)\|_{H^1}^2 + 2\int_J \langle \tilde{w}, \tilde{w_t} \rangle_{H^1} dt\\
&= \|\tilde{w}(t_n)\|_{H^1}^2 + 2 \int_J \langle \tilde{w}, A_a\tilde{w} + Q{\mathcal{F}} \rangle_{H^1}dt \\
& \leq \epsilon_1 + 2\int_J \langle \tilde{w},A_a\tilde{w}\rangle_{H^1}dt+\int_J\langle \tilde{w},Q{\mathcal{F}}\rangle_{H^1} dt\\
& \leq \epsilon_1 -\frac{2b\eta^2}{64C^2}+\int_J\langle\tilde{w},Q{\mathcal{F}}\rangle_{H^1}dt\\
& \leq \frac{\eta^2}{20}  -\frac{2b\eta^2}{64C^2}+\int_J\langle\tilde{w},Q{\mathcal{F}}\rangle_{H^1}dt
\end{align*}
by Proposition \ref{spectral}, the inductive hypothesis, the a priori control on $\tilde{w}$ on $J$, and the fact that the length of $J$ is at most $1$.  It remains to estimate 
\begin{align*}
&\int_J\langle\tilde{w},Q{\mathcal{F}}\rangle_{H^1}dt\\  
= &\int_J \langle \tilde{w}, Q((c-c_0-\dot{\gamma})(\p_y-a)\tilde{w}-e^{ay}I_{N(n)}\p_y(v^2)
+   e^{ay} (\dot{c}\p_c + \dot{\gamma}\p_y)I_{N(n)}\psi_c\\
&\qquad  -e^{ay}\p_y(I_{N(n)}(\psi_c v)-\psi_cI_{N(n)}v))\rangle_{H^1}dt\\ 
 = &\mbox{(I)+(II)+(III)+(IV)}. 
\end{align*}
For (I), note that $Q(\p_y-a)\tilde{w}=(\p_y-a)\tilde{w}$, and $\p_y$ is anti-symmetric, so (I)=$\int_J((c-c_0)-\dot{\gamma})(-a)\|w\|_{H^1}^2dt$, which is at most $\frac{2a\eta^3}{64C^2}$, which is certainly less than $\frac{\eta}{20}$.  For (II), we have
\begin{align*}
&\int_J \langle \tilde{w}, -e^{ay}I_{N(n)}\p_y(v^2) \rangle_{H^1}dt\\ 
 = &\int_J\la \tilde{w},e^{ay}\p_y[I_{N(n)}v^2-(I_{N(n)}v)^2]\rangle_{H^1}dt + \int_J \langle \tw, e^{ay}\p_y(I_{N(n)}v)^2 \rangle_{H^1} dt \\
 \leq  &\int_J\la \tilde{w},e^{-ay}\p_y[I_{N(n)}(v^2)-(I_{N(n)}v)^2]\rangle_{H^1}dt + \|\tw\|_{X^{1,\frac12,1}}\|e^{ay}\p_y(I_{N(n)}v)^2\|_{X^{1,-\frac12,1}}\\
 \leq &2N(n)^{-\frac14}\|\tw\|_{X^{1,\frac12,1}}\|\tw\|_{X^{1,\frac12,1}}\|\tv\|_{X^{1,\frac12,1}} + \|\tw\|_{X^{1,\frac12,1}}\|\tw\|_{X^{1,\frac12,1}}\|\tv\|_{X^{1,\frac12,1}}\\
 \leq &(1 + 2N(n))^{-\frac14})\|\tw\|_{X^{1,\frac12,1}}^2\|\tv\|_{X^{1,\frac12,1}}\\
 \leq &(1+2N(n))^{-\frac14}C_0^3\epsilon_1^2c_2\\
 \leq &\frac{1}{5760}\frac{C_0^3}{C^2}\eta^2\\
 \leq &\frac{\eta^2}{20},
\end{align*}
by Corollary \ref{decaycorollary}, Proposition \ref{alphabilinear}, and the local well-posedness estimate \eqref{lwpest}.  For (III), recall that $I_{N(n)}\psi_c-\psi_c = {\mathcal{O}}(N^{-C})$ for $C$ arbitrarily large.  So,
since \\$Q(e^{ay}\p_c\psi_{c_0})=Q(e^{ay}\p_y\psi_{c_0})=0$, we have
\begin{align*}
\mbox{(III)} & = \int_J \la \tw, Q[e^{ay}(\dot{c}\p_c + \dot{\gamma}\p_y)((I_N(n)-1)[\psi_c-\psi_{c_0}]+ [\psi_c-\psi_{c_0}]+\psi_{c_0}))]\rangle_{H^1}dt \\
& =  \int_J \la \tw, Q[e^{ay}(\dot{c}\p_c+\dot{\gamma}\p_y)((I_N(n)-1)[\psi_c-\psi_{c_0}]+ [\psi_c-\psi_{c_0}])]dt \\
& \leq (1+\tilde{C}N^{-\tilde{\tilde{C}}})\int_J(|\dot{c}|+|\dot{\gamma}|)|c-c_0|\|\tw\|_{H^1}dt\\
& \leq \tilde{C}\frac{\eta}{4}\frac{\eta}{3}\frac{\eta}{8C}\\
& \leq \frac{\eta^2}{20}.
\end{align*}
Finally, for (IV), we have 
\begin{align*}
\int_J \la & \tw, e^{ay} \p_y(I_{N(n)}(\psi_cv) -\psi_cI_{N(n)}v))\rangle_{H^1}dt \\ & = \int_J \la \tw, e^{ay}\p_y[I_{N(n)}(\psi_cv) -(I_{N(n)}\psi_c)(I_{N(n)}v]dt\\
&\qquad  +  \int_J \la \tw, e^{ay}\p_y[((I_{N(n)}\psi_c)-\psi_c)(I_{N(n)}v]dt\\
& \leq \|\tw\|_{{X^{1,\frac12,1}}}N^{-\frac14}(\|e^{ay}I_{N(n)}\psi_c\|_{{X^{1,\frac12,1}}}\|\tv\|_{{X^{1,\frac12,1}}} + \|I_{N(n)}\psi_c\|_{{X^{1,\frac12,1}}}\|\tw\|_{{X^{1,\frac12,1}}}) \\ \qquad & \qquad + \tilde{C}N^{-\tilde{\tilde{C}}}\|\tw\|_{{X^{1,\frac12,1}}}[\|e^{ay}\psi_c\|_{{X^{1,\frac12,1}}}\|\tv\|_{{X^{1,\frac12,1}}} + \|\psi_c\|_{{X^{1,\frac12,1}}}\|\tw\|_{{X^{1,\frac12,1}}}]\\
& \leq 4\|\tw\|_{{X^{1,\frac12,1}}}(N^{-\frac14}\|\tv\|_{{X^{1,\frac12,1}}}+ \|\tw\|_{{X^{1,\frac12,1}}}))\\
& \leq 4 C_0\epsilon_1(c_2 + \eta)\\
& \leq \frac{1}{120}\frac{C_0}{C}\eta^2\\
& \leq \frac{\eta^2}{20}
\end{align*}
Adding it all together, we get that $$\|\tw(t_n + \delta_1)\|_{H^1}^2 \leq \frac{\eta^2}{20} -\frac{2b\eta^2}{64C^2}+\frac{\eta^2}{20}+\frac{\eta^2}{20}+\frac{\eta^2}{20}+\frac{\eta^2}{20} < \frac{\eta^2}{4},$$ so, $L(t_n+\delta_1) < \frac{\eta}{4}+\frac{\eta}{4}+\frac{\eta}{2} = \eta,$and hence $\delta_1 = \delta$.  

Now we are ready to make the inductive step.  Consider (2)-(5) at time $t_{n+1}$.  As above, we have that $|\dot{c}_n(t_{n+1})| + |\dot{\gamma}_n(t_{n+1})| \leq 2C\|\tw(t_{n+1})\|_{H^1},$ so (2) and (3) are validated whenever (5) is.  Indeed, the estimates (2)-(3) hold on the entire interval $J_n$ whenever $\|w\|_{H^1}$ is similarly controlled on the interval.  Similarly, whenever $(2)$ is valid on $J_n$, we have
\begin{align*}
|c(t_{n+1})-c_0| & \leq |c(t_n)-c_0| + \int_{J_n} |\dot{c}_n(t)|dt \\
& \leq C\frac{1-\kappa^n}{1-\kappa}\epsilon_1 + C\kappa^n\epsilon_1\\
& \leq C\frac{1-\kappa^{n+1})}{1-\kappa}\epsilon_1,
\end{align*}
so (4) is also validated.  It therefore remains only to control $\|w_n(t)\|_{H^1}$ on $J_n$ and estimate $\|w_{n+1}(n+1)\|_{H^1}^2-\|w_n(n)\|_{H^1}^2$.  We must therefore do two things: Estimate $\|w_{n+1}(n+1)\|_{H^1}^2-\|w_n(n+1)\|_{H^1}^2$, and estimate $\|w_n(t)\|_{H^1}^2$ on $J_n$.   In what follows, for notational simplicity, we will estimate $\|w_n(t_{n+1})\|_{H^1}^2$, but the same estimate is valid for any $t \in J_n$.  Define $K_n(n)=\|w_n(t_n)\|_{H^1}^2$.  Then, as computed above, we have the following increment: 
\begin{align*}
&K_n(n+1)-K_n(n)\\ 
= &2\int_{J_n} \langle \tilde{w},A_a\tilde{w}\rangle_{H^1}dt+\int_{J_n}\langle \tilde{w},Q{\mathcal{F}}\rangle_{H^1} dt\\
= &2\int_{J_n} \langle \tilde{w},A_a\tilde{w}\rangle_{H^1}dt+2\int_{J_n} \langle \tilde{w}, Q((c-c_0-\dot{\gamma})(\p_y-a)\tilde{w}-e^{ay}I_{N(n)}\p_y(v^2) \\& \phantom{blah}+   e^{ay} (\dot{c}\p_c + \dot{\gamma}\p_y)I_{N(n)}\psi_c-e^{ay}\p_y(I_{N(n)}(\psi_c v)-\psi_cI_{N(n)}v))\rangle_{H^1}dt\\
= &\mbox{(0)+(I)+(II)+(III)+(IV)}
\end{align*}
We estimate these terms as above.  For (0), by Proposition \ref{spectral}, this is at most $-2b\int_{J_n}\|w\|_{H^1}^2dt$.  For (I), we get 
$$\int_{J_n}(c-c_0)-\dot{\gamma})(-a)\|w\|_{H^1}^2dt \leq 4a\eta \int_{J_n}\|w(t)\|_{H^1}^2dt.$$  For (II), we obtain, as above, $$\int_{J_n} \langle \tilde{w}_n, e^{ay}I_{N(n)}\p_yv^2\rangle_H^1 dt   \leq (1 + 2N(n))^{-\frac18})\|\tw_n\|_{X^{1,\frac12,1}}^2\|\tv_n\|_{X^{1,\frac12,1}} \leq Cc_0N(n).$$  Then, for (III), we get as above
$$(III) \leq (1+ \tilde{C}N^{-\tilde{\tilde{C}}})\int_J(|\dot{c}|+|\dot{\gamma}|)|c-c_0|\|\tw\|_{H^1}dt \leq 2\int_{J_n}\eta \|\tw_n(t)\|_{H^1}^2dt.$$

Finally, for (IV), we have, as above, with $\tau$ a small positive number, 
\begin{align*}
(IV) &\leq \|\tilde{w}\|_{X^{1,\frac12,1}}\left ( (N^{\frac34-s+} + \tau)\|\tw_n\|_{X^{1,\frac12,1}} + N^{\frac34-s+}\|v\|_{X^{1,\frac12,1}}\right )\\ 
&\leq 2\tau N(n) + N^{\frac34-s+}c_0\sqrt{N(n)}.
\end{align*}
Notice that $N(n)$ has been chosen so that $N(n)^{\frac34-s+} \ll \kappa^n \leq C\epsilon_1\kappa^n$.  Therefore, putting everything together, we have that 
\begin{align*}
K_n(n+1)-K_n(n) &\leq (-2b +4a\eta +2\eta)\int_{J_n}\|\tw_n(t)\|_{H^1}^2dt\\ 
&\qquad + (Cc_0 +2\tau) N(n)+Cc_0\epsilon_1\kappa^n\sqrt{K_n(n)}.
\end{align*}

Now, suppose that $K_n(n)\sim (\epsilon_1\kappa^n)^2.$  Then by the same argument as in \cite{pigottraynorH1}, it follows that $K_n(n+1) \leq \max{\{(1-b),\frac34\}}K_n(n)\leq \kappa^{2+\frac{1-s}{s-\frac34-}-}K_n(n)$.  Finally, it remains to compare $K_{n+1}(n+1)$ to $K_n(n+1)$.  By properties of the $I_N$ multiplier, we have that 
\begin{align*} K_{n+1}(n+1) & \leq \left (\frac{N(n+1)}{N(n)} \right )^{1-s}K_n(n+1) \\ & \leq \kappa^{\frac{1-s}{\frac34-s+}+}K_n(n+1) \\ & \leq  \kappa^{\frac{1-s}{\frac34-s+}+}\kappa^{2 + \frac{1-s}{s-\frac34-}-}K_n(n) \\ & \leq \kappa^2K_n(n).\end{align*}

On the other hand, if $K_n(n)\ll (\epsilon_1\kappa^n)^2,$ then the largest term on the right hand side is the last one, and we obtain that $K_n(n+1) \ll (\epsilon_1\kappa^{n})^2$.  Then $K_{n+1}(n+1) \ll \kappa^(\frac{1-s}{\frac34-s+}+)(\epsilon_1\kappa^n)^2,$ which can be taken to be at most $\epsilon_1^2\kappa^{2(n+1)}.$  In either case, after applying the inductive hypothesis, we obtain that $K_{n+1}(n+1) \leq (\epsilon_1 \kappa^{n+1})^2,$ so $\|\tilde{w}_{n+1}(n+1)\|_{H^1} \leq \epsilon_1\kappa^{n+1}$.   Hence the inductive step holds and the proof of the lemma is complete.  

\end{proof}

To conclude the proof of Theorem \ref{main}, let $r=\kappa^{\frac{1}{\delta}}$.  Then (2) and (3) are immediate from the lemma.  To conclude (1), note that \\ $\|e^{ay}I_1v(t)\|_{H^1} \leq \|e^{ay}I_Nv(t)\|_{H^1} = \|\tilde{w}(t)\|_{H^1}$ for any $N$, by the properties of $I_N$ and Lemma \ref{omegaaRH1}.  Hence (1) follows from the last conclusion of the inductive lemma.


\end{document}